\newtheorem{theorem}{Theorem}[section]
\newtheorem{proposition}{Proposition}[section]
\newtheorem{lemma}{Lemma}[section]
\newtheorem{corollary}{Corollary}[section]
\theoremstyle{definition}
\newtheorem{example}{Example}[section]
\newtheorem{algorithm}{Algorithm}[section]
\newtheorem{remark}{Remark}[section]
\newcommand{\Z}{\mathbb{Z}}
\newcommand{\Q}{\mathbb{Q}}
\newcommand{\R}{\mathbb{R}}
\newcommand{\K}{\mathbb{K}}
\newcommand{\C}{\mathbb{C}}
\newcommand{\F}{\mathbb{F}}
\newcommand{\EE}{\mathbb{E}}
\newcommand{\LL}{\mathbb{L}}
\begin{document}

\title[Arithmetic matrices for number fields I]{Arithmetic matrices for number fields I}

\author[Samuel A. Hambleton]{Samuel A. Hambleton}

\address{School of Mathematics and Physics, The University of Queensland, St. Lucia, Queensland, Australia 4072}

\email{sah@maths.uq.edu.au}

\subjclass[2010]{Primary 11R04, 11R33; Secondary 11C20, 11D57}

\date{September 25, 2018.}

\keywords{arithmetic matrix, ring of integers}

\begin{abstract}
We provide a simple way to add, multiply, invert, and take traces and norms of algebraic integers of a number field using integral matrices. With formulas for the integral bases of the ring of integers of at least a significant proportion of numbers fields, we obtain explicit formulas for these matrices and discuss their generalization. These results are useful in proving statements about the particular number fields they work in. We give a meaningful diagonalization that is helpful in this regard and can be used to define such matrices in general. The matrix identities provided suggest that consideration of numerical methods in linear algebra may have applications in algebraic number theory. Multiplication of several algebraic integers at once might be more efficiently implemented using methods of efficient matrix multiplication. If we are multiplying an algebraic integer in a field of degree $n$ by each of $n$ algebraic integers, then this can be done in $O \left( n^{\log_2(7)} \right)$ multiplications. The matrix identities given here generalize Brahmagupta's identity upon taking determinants, which are multiplicative.
\end{abstract}

\maketitle

\section{Introduction}\label{intro}

It is well know that number fields can be generated by a single element $\zeta $. This fact allows efficient multiplication of the elements of a field $\Q (\zeta )$ of large degree $n$ over $\Q$, as univariate polynomial multiplication may be performed using the fast Fourier transform (FFT) \cite{CT2}. However, the ring of integers of a number field does not always possess a power basis. When we want to express the product of two algebraic integers in terms of the same integral basis in which they were defined, use of the field generator $\zeta $ is perhaps not always the best approach for achieving this. Moreover, when the product of several algebraic integers is required, such as may be the case when we wish to obtain the integral basis of the product of two ideals, matrices provide a means of doing this efficiently. The aim of this article is to introduce a technique for constructing integer matrices that can be used to more conveniently and possibly more efficiently perform arithmetic in the ring of integers of a number field. Algorithms for efficient implementation of matrix multiplication include Strassen's algorithm \cite{Strassen2} and the Winograd-Waksman algorithm \cite{Waksman2,Winograd2}. However, irrespective of efficiency, use of the integral matrices that will be introduced here can greatly simplify proofs of statements about the fields they work in since the diagonalized form of the matrix can be used. There are numerous examples of this in the case that $n = 3$ found in \cite{cfg2}. 

The main result of this article is a generalization of Proposition \ref{mainres} below, Proposition \ref{genclaim}, so that it may describe more number fields. These results show that algebraic integers $\alpha = \sum_{j = 0}^{n-1} x_j \rho_j$, can be encoded in the $n$ by $n$ matrix $M_{\EE}^{(\alpha )} = \left[ a_{ij} \right]$ given by the formulas for the entries $a_{ij}$, (and later generalized in Section \ref{essent})
\begin{eqnarray}
\label{defaijone} a_{11} & = & x_0 , \\
\label{defaijtwo} a_{i1} & = & x_{i-1} , \ \text{for} \ i > 1 , \\
\label{defaijthreee} a_{1j} & = & - a_{n+1} \sum_{k = 1}^{j-1} a_k x_{k+n-j}, \ \text{for} \ j > 1 , \\
\label{defaijfour} a_{ij} & = &  \sum_{k = 1}^{j-1} a_k x_{k+i-j-1} \ \text{for} \ i > j > 1 , \\
\label{defaijfive} a_{ij} & = & \delta_{ij} x_0 - \sum_{k = j}^{m_{ij}} a_k x_{k+i-j-1} \ \text{for} \ j \geq i > 1 , \\
\nonumber m_{ij} & = & \min (n - i + j, n + 1) .
\end{eqnarray}  
$\delta_{ij}$ is the Kronecker-delta symbol, ($1$ if $i=j$ and $0$ otherwise). When $n = 3$, the formulas produce the matrix 
\begin{equation*}
N_{\K }^{(\alpha )} = \left(
\begin{array}{ccc}
 u & -a d y & -a d x-b d y \\
 x & u-b x-c y & -c x-d y \\
 y & a x & u-c y \\
\end{array}
\right) ,
\end{equation*}
where we rename letters $x_0 = u$, $x_1 = x$, $x_2 = y$, $a_1 = a$, $a_2 = b$, $a_3 = c$, and $a_4 = d$. Moreover, the proposition shows that the matrix $M_{\EE}^{(\alpha )}$ has a diagonalized form which gives it several properties that are useful in working with number fields. 

\begin{proposition}\label{mainres}
Let $\EE = \Q (\zeta )$ be a number field of degree $n$ over $\Q$, where $\zeta $ is a root of the irreducible polynomial $f(x) = a_1 x^n + a_2 x^{n-1} + \dots + a_{n+1} \in \Z [x]$ with roots $\zeta_0, \zeta_1, \dots , \zeta_{n-1}$; $\zeta_0 = \zeta $. Let $\left\{ \rho_0, \rho_1, \dots , \rho_{n-1} \right\}$ be an integral basis for the ring of integers $\mathcal{O}_{\EE}$ of $\EE$ satisfying $ \rho_j = \sum_{k = 1}^{j} a_k \zeta^{j+1-k} \ (j \geq 1) , \ \rho_0 = 1$. Let $\kappa_{0}, \kappa_1, \dots , \kappa_{n-1}$ be the embeddings of $\EE$ in $\C$ so that $\kappa_{t}\left( \zeta_{u} \right) = \zeta_{v}$, $v \equiv t + u \pmod{n}$. Let $\Gamma_{\EE} = \left[ \kappa_{i-1} \left( \rho_{j-1} \right) \right]$ and let $\Theta_{\EE}^{(\alpha )} = \left[ \delta_{ij} \kappa_{i-1} \left( \alpha \right) \right] $ so that $\Theta_{\EE }^{(\alpha )}$ is a diagonal matrix. Let $N_{\EE}^{(\alpha )} = \Gamma_{\EE}^{-1} \Theta_{\EE}^{(\alpha )} \Gamma_{\EE}$, and let $M_{\EE}^{(\alpha )} = \left[ a_{ij} \right]$ be the matrix defined by \eqref{defaijone} to \eqref{defaijfive}. Then the following properties of the matrix $N_{\EE}^{(\alpha )}$ hold for $\alpha , \beta \in \mathcal{R} \left[ \rho_0, \rho_1, \dots , \rho_{n-1} \right]$, where $\mathcal{R}$ is a commutative ring with $x_0, x_1, \dots , x_{n-1} \in \mathcal{R}$, $\alpha = \sum_{j=0}^{n-1} x_j \rho_j$. 
\begin{enumerate}
\item $N_{\EE}^{(\alpha )} = M_{\EE}^{(\alpha )}$.
\item $N_{\EE}^{(\alpha )} + N_{\EE}^{(\beta )} = N_{\EE}^{(\alpha + \beta )}$.
\item $N_{\EE}^{(\alpha )} N_{\EE}^{(\beta )} = N_{\EE}^{(\beta )} N_{\EE}^{(\alpha )} = N_{\EE}^{(\alpha \beta )}$.
\item The trace of the matrix $N_{\EE}^{(\alpha )}$ is equal to the trace of $\alpha $.
\item The determinant of the matrix $N_{\EE}^{(\alpha )}$ is equal to the norm of $\alpha $.
\item $N_{\EE}^{(1 / \alpha )} = \left( N_{\EE}^{(\alpha )} \right)^{-1}$. 
\item $N_{\EE}^{(\alpha )}$ has entries in $\Z$ if and only if $\alpha \in \mathcal{O}_{\EE}$. 
\end{enumerate}
\end{proposition}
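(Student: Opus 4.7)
The overarching strategy is to first identify $N_{\EE}^{(\alpha )}$ conceptually, so that only one of the seven claims requires serious computation. Writing $v_\alpha = (x_0, x_1, \ldots , x_{n-1})^T$ for the $\rho$-coordinate vector of $\alpha $, the definition of $\Gamma_{\EE}$ gives $\Gamma_{\EE} v_\alpha = (\kappa_0(\alpha ), \ldots , \kappa_{n-1}(\alpha ))^T$, and hence $\left( \Gamma_{\EE}^{-1} \Theta_{\EE}^{(\alpha )} \Gamma_{\EE} \right) v_\beta$ is the $\rho$-coordinate vector of the product $\alpha \beta $. Thus $N_{\EE}^{(\alpha )}$ is by construction the matrix of ``multiplication by $\alpha $'' on $\EE$ in the basis $\rho_0, \ldots , \rho_{n-1}$; items (2)--(6) will be consequences of this single observation.

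The hard part is claim (1), the explicit identification $N_{\EE}^{(\alpha )} = M_{\EE}^{(\alpha )}$. My plan is to read off column $j$ of $N_{\EE}^{(\alpha )}$ as the $\rho$-coordinates of $\alpha \rho_{j-1}$ and then check that these coordinates match the piecewise formulas \eqref{defaijone}--\eqref{defaijfive}. Column $1$ is immediate, since $\alpha \rho_0 = \alpha = \sum_i x_{i-1} \rho_{i-1}$, accounting for \eqref{defaijone} and \eqref{defaijtwo}. For $j > 1$, the defining relation $\rho_j = \sum_{k = 1}^{j} a_k \zeta^{j+1-k}$ yields the telescoping identity $\zeta \rho_{j-1} = \rho_j - a_j \zeta $ for $2 \leq j \leq n-1$, together with the reduction $a_1 \zeta^n = - \sum_{k=2}^{n+1} a_k \zeta^{n+1-k}$ coming from $f(\zeta ) = 0$; the latter is precisely what produces the factor $-a_{n+1}$ appearing in \eqref{defaijthreee}. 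Expanding $\alpha \rho_{j-1} = \sum_k x_k \rho_k \rho_{j-1}$ and rewriting each $\zeta$-monomial back into the $\rho$-basis via these two identities, an induction on $j$ (or, equivalently, a coefficient-by-coefficient comparison in the power-of-$\zeta$ basis) recovers the three separate cases \eqref{defaijthreee}, \eqref{defaijfour}, \eqref{defaijfive}, which track the distinct positions $(i,j)$ in the matrix. This bookkeeping is by far the most delicate step and is where I expect essentially all of the technical work to lie.

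Items (2)--(6) then follow cleanly from the diagonalization. Additivity (2) uses that $\alpha \mapsto \Theta_{\EE}^{(\alpha )}$ is $\Q$-linear since each $\kappa_i$ is. For (3), diagonal matrices commute and each $\kappa_i$ is a ring homomorphism, so $\Theta_{\EE}^{(\alpha )} \Theta_{\EE}^{(\beta )} = \Theta_{\EE}^{(\alpha \beta )} = \Theta_{\EE}^{(\beta )} \Theta_{\EE}^{(\alpha )}$, and conjugation by $\Gamma_{\EE}$ preserves these products. Trace (4) and determinant (5) are similarity invariants, so they equal $\sum_i \kappa_{i}(\alpha ) = \operatorname{Tr}_{\EE / \Q}(\alpha )$ and $\prod_i \kappa_{i}(\alpha ) = \operatorname{N}_{\EE / \Q }(\alpha )$ respectively. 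For (6), whenever $\alpha \neq 0$ one has $\Theta_{\EE}^{(1/\alpha )} = \left( \Theta_{\EE}^{(\alpha )} \right)^{-1}$ entrywise on the diagonal, and conjugating by $\Gamma_{\EE}$ gives $N_{\EE}^{(1/\alpha )} = \left( N_{\EE}^{(\alpha )} \right)^{-1}$.

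Finally, for (7), the ``if'' direction is an inspection of \eqref{defaijone}--\eqref{defaijfive}: when $\alpha \in \mathcal{O}_{\EE}$ the coordinates $x_j$ lie in $\Z $, the coefficients $a_k$ are integers by the hypothesis $f \in \Z [x]$, and every entry of $M_{\EE}^{(\alpha )}$ is a $\Z$-bilinear expression in the $a_k$'s and the $x_j$'s. Conversely, the first column of $M_{\EE}^{(\alpha )}$ is exactly $(x_0, x_1, \ldots , x_{n-1})^T$ by \eqref{defaijone}--\eqref{defaijtwo}, so if $M_{\EE}^{(\alpha )}$ has entries in $\Z $ then every $x_j \in \Z $ and therefore $\alpha = \sum_j x_j \rho_j \in \mathcal{O}_{\EE}$.
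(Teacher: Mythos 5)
Your framing is the right one and, for items (2)--(6), is exactly what the paper does: once $N_{\EE}^{(\alpha )}$ is recognized as the matrix of multiplication by $\alpha$ in the $\rho$-basis (equivalently, once the diagonalization $\Gamma_{\EE}^{-1}\Theta_{\EE}^{(\alpha )}\Gamma_{\EE}$ is in hand), additivity, multiplicativity, commutativity, trace, determinant and inverses all follow from linearity and multiplicativity of the embeddings together with similarity invariance. Your item (7) converse is in fact a little cleaner than the paper's Corollary: you read $x_0,\dots ,x_{n-1}$ off the first column of $M_{\EE}^{(\alpha )}$ and invoke the integral-basis property, whereas the paper extracts $\alpha$ as the $(1,1)$ entry of $\Theta_{\EE}^{(\alpha )}\Gamma_{\EE} = \Gamma_{\EE}N_{\EE}^{(\alpha )}$; both are fine and both presuppose item (1).

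The shortfall is item (1), which is the only part of the proposition with real content, and your treatment of it is a plan rather than a proof. Your proposed route (read column $j$ as the $\rho$-coordinates of $\alpha\rho_{j-1}$, pass to the power basis in $\zeta$ via $\zeta\rho_{j-1}=\rho_j - a_j\zeta$, and reduce $a_1\zeta^{n}$ using $f(\zeta )=0$) is sound and is in substance the same computation the paper performs in Lemma~\ref{samematr}: there one writes $\Gamma_{\EE}=\Xi A$ with $\Xi$ the Vandermonde matrix of the roots and $A$ the coefficient matrix, reduces by additivity to $\alpha =\rho_q$, and exhibits $G_q = A M_{\EE}^{(\rho_q )}A^{-1}$ explicitly as a Sylvester-type block matrix $(R_q\ S_q)$ for which $\Xi G_q = \bigl[\sum_{k=1}^{q}a_k\zeta_{i-1}^{\,j+q-k}\bigr] = \Theta_{\EE}^{(\rho_q )}\Xi$. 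That explicit identification is precisely the ``bookkeeping'' you defer; since matching the piecewise formulas \eqref{defaijthreee}--\eqref{defaijfive} against the reduction is the entire assertion of item (1), the sentence ``an induction on $j$ recovers the three separate cases'' cannot stand in for it. To complete the argument you should either carry out the coefficient comparison in the $\zeta$-basis for each of the three index regimes $(1,j)$, $(i>j>1)$, $(j\geq i>1)$, or reproduce the paper's device of conjugating $M_{\EE}^{(\rho_q )}$ by $A$ and verifying the single identity $\Xi G_q = \Theta_{\EE}^{(\rho_q )}\Xi$, which packages all three cases at once. A last small point: the entries of $M_{\EE}^{(\alpha )}$ are not bilinear in the $a_k$ and $x_j$ (the first row carries a factor $a_{n+1}a_k$), but they are integer polynomials in them, which is all the ``if'' direction of (7) needs.
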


\noindent The analogue of this result for cubic fields has been helpful in proving results about ideals and lattices of cubic fields; see \cite{cfg2}. Proposition \ref{mainres} may have similar applications. We call the symbolic expression in $a_1, a_2, \dots , a_{n+1}, x_0, x_1, \dots , x_{n-1}$ for $N_{\EE}^{(\alpha )}$ the {\em arithmetic matrix} for the number field $\EE$. Hermann Weyl \cite{Weyl2} discussed matrices satisfying some items of the proposition. However, Weyl described such matrices in terms of a basis for the number field over $\Q$ rather than an integral basis of the ring of integers. We present matrices in this article that may be thought of in the context of representation theory \cite{Goodman2} as matrix representations of the ring of integers of a number field. In the context of algebraic geometry, the equation 
\begin{equation*}
\det \left( N_{\EE}^{(\alpha )} \right) = 1
\end{equation*}
is the kernel of the norm map
\begin{equation*}
N_{\EE / \Q } : \ \alpha \longmapsto N_{\EE / \Q }(\alpha ) ,
\end{equation*} 
known as a norm one torus; see for example \cite{HambLemm12,LeBruyn42,Lem0312,Lemnormtori42,LemmParam72,Voskbook42,Vosktwodimone42,Vosktwodimtwo42}.   

The sections are presented in the following order.
\begin{itemize}
\item Proof of Proposition \ref{mainres}, \S \ref{general}. 
\item Quartic fields, as an example with remarks on classical invariants \S \ref{five}.
\item Existence of binary forms of discriminant equal to a number field of the same degree, and existence of an integral basis of the prescribed form \S \ref{essent}.
\item Efficient implementation of matrix multiplication as it applies to multiplication of algebraic integers, \S \ref{matmult}. 
\end{itemize}
\noindent In a subsequent article \cite{hambpara2} we will show that the arithmetic matrices can be used to parameterize rings by binary forms. 

It appears as though there is not always an integral basis of the form given in Proposition \ref{mainres}. However, the modifications required are minor in order for the matrices we describe to be applicable to the integral bases of the rings of integers of significantly more number fields. Since it is simpler to first apprehend and prove the statement as presented in Proposition \ref{mainres}, we will retain the current presentation of the result until \S \ref{essent}. In that section we will replace \eqref{defaijone} to \eqref{defaijfive} in cases in which there is no irreducible binary form of degree $n$ having discriminant equal to the discriminant of a number field of the same degree. We state this in Proposition \ref{genclaim}. When $1 < n < 4$, we can always find such a binary form.  

\section{Number fields of degree $n$}\label{general}

Let $\EE = \Q (\zeta )$ be a number field of degree $n$ over $\Q$, where 
\begin{equation}\label{polyf}
f(x) = a_1 x^n + a_2 x^{n-1} + \dots + a_{n+1} \in \Z [x]
\end{equation}
with roots $\zeta_0, \zeta_1, \dots , \zeta_{n-1}$; $\zeta_0 = \zeta $. Let $\{ \rho_0 , \rho_1, \dots , \rho_n \}$ be an integral basis for the ring of integers $\mathcal{O}_{\EE}$, and let $\theta = \sum_{j = 0}^{n-1} x_j \rho_j $, where the $x_j$ are indeterminants. In order to find an arithmetic matrix $N_{\EE }^{(\theta )}$, let $\Theta_{\EE}^{\left( \theta \right)}$ be the $n \times n$ diagonal matrix with diagonal entries $\kappa_{i-1} (\theta )$, where the $\kappa_{i-1}$ $(i = 1,2, \dots, n)$ are the embeddings of $\F$ in $\C$, and let $\Gamma_{\EE} = \left[ a_{ij} \right]$ with entries $a_{ij} = \kappa_{i-1} \left( \rho_{j-1} \right)$, so that $\det \left( \Gamma_{\F} \right)^2 $ is the discriminant of $\EE$. We begin by defining 
\begin{equation}\label{fin}
 N_{\EE}^{\left( \theta \right) } = \Gamma_{\EE}^{-1} \Theta_{\EE}^{\left( \theta \right)} \Gamma_{\EE} ,
\end{equation}
where preferably we have a formula for the $\rho_j$ in terms of invariants of $\EE$. In the case that $\EE = \K$ is a cubic field, we can assume that the index form $\mathcal{C} = (a, b, c, d)$ that is used to define $\K$ is a reduced binary cubic form, which makes the form $(a, b, c, d)$ unique in the $\text{GL}_2(\Z )$ class it belongs to. Given that the matrix $N_{\EE}^{\left( \theta \right) }$ is defined to satisfy \eqref{fin}, it is easy to see that the determinant of $ N_{\EE}^{\left( \theta \right) }$ must coincide with the norm of $\theta $ over $\Q$, and many of the statements of Proposition \ref{mainres} hold by inspection. In fact: 
\begin{enumerate}
\item The arithmetic matrices are additive, $N_{\EE}^{ \left( \theta_1 + \theta_2 \right) } = N_{\EE}^{ \left( \theta_1 \right) } + N_{\EE}^{ \left( \theta_2 \right) } $.
\item The arithmetic matrices are multiplicative, $N_{\EE}^{ \left( \theta_1 \theta_2 \right) } = N_{\EE}^{ \left( \theta_1 \right) } N_{\EE}^{ \left( \theta_2 \right) }$.
\item The traces coincide since $\text{tr}(A B) = \text{tr}(B A)$, see Lang \cite{Lang2}.
\item The determinant coincides with the norm. 
\end{enumerate}
Notice that the $\kappa_{i - 1}(\theta )$ are the eigenvalues of the matrix $N_{\EE }^{(\theta )}$. These results suggest that many of the algorithms of linear algebra may have applications in algebraic number theory. Some algorithms in numerical analysis applied to number theory will have limitations however. If we use the FFT to multiply two algebraic integers $\alpha , \gamma $ belonging to $\mathcal{O}_{\EE } \ = \left\{ \rho_0, \rho_1, \dots , \rho_{n-1} \right\}$, where $\EE = \Q (\zeta )$ is of degree $n$ over $\Q$, we would need to express $\alpha , \gamma $ as 
\begin{align*}
\alpha & = \sum _{j = 0}^{n-1} a_j \zeta^j , & \gamma & = \sum _{j = 0}^{n-1} c_j \zeta^j , & & a_j , c_j \in \Q ,
\end{align*}
using a linear relationship between the algebraic integers $\rho_0, \rho_1, \dots , \rho_{n-1} $ and the basis $1, \zeta , \zeta^2 , \dots , \zeta^{n-1}$ of $\EE$. However, we note that translating between the basis of $\EE $ and the basis of $\mathcal{O}_{\EE }$ is not without multiplications. In fact, we have
\begin{align}\label{gammadecomp}
 \Gamma_{\EE } & = \Xi A, 
\end{align} 
where $\zeta_0 = \zeta$, $\zeta_i$ are the roots of $f(x)$, and $\Xi = \left[ \zeta_{i-1}^{j-1} \right] $,
\begin{align*}
\Xi & = \left(
\begin{array}{ccccc}
 1 & \zeta &   \dots & \zeta^{n-1} \\
 1 & \zeta_{1}  & \dots & \zeta_{1}^{n-1} \\
 1 & \zeta_{2}  & \dots & \zeta_{2}^{n-1} \\
 1 & \zeta_{3}  & \dots & \zeta_{3}^{n-1} \\
 \vdots & \vdots  & \ddots & \vdots \\
 1 & \zeta_{n-1} & \dots &  \zeta_{n-1}^{n-1} \\
\end{array}
\right) , & A & = \left(
\begin{array}{cccccc}
 1 & 0 & 0 & 0 & \dots & 0 \\
 0 & a_1 & a_2 & a_3 & \dots & a_{n-1} \\
 0 & 0 & a_1 & a_2 & \dots & a_{n-2} \\
 0 & 0 & 0 & a_1 & \dots & a_{n-3} \\
 \vdots & \vdots & \vdots & \vdots & \ddots & \vdots \\
 0 & 0 & 0 & 0 & \dots & a_1 \\
\end{array}
\right) .
\end{align*}

We have seen that the matrices $N_{\EE }^{ \left( \theta \right) }$ have a diagonalization given by their definition. The diagonal elements of $\Theta_{\EE }^{ \left( \theta \right) }$ are the eigenvalues of $N_{\EE }^{ \left( \theta \right) }$ and the columns of $\Gamma_{\EE}^{-1}$ are the eigenvectors of $N_{\EE}^{ \left( \theta \right) }$. 

Frobenius \cite{Frobenius2}, Drazin \cite{Drazin2}, and others considered the eigenvalues and eigenvectors of matrices that commute with one another. It is well known \cite{Williamson2} that commuting matrices over an algebraically closed field are simultaneously triangularizable; if $M_{1} , M_{2} , \dots , M_{m}$ are $n \times n$ matrices which commute with each other, then there exists a matrix $P$ such that $P^{-1} M_{j} P$ is upper triangular for all $j \in \{ 1, 2, \dots , m \}$. This prompts the question of whether we can understand results on commuting matrices in the context of algebraic number theory. It seems however that the arithmetic matrices may not be simultaneously placed in Hermite normal form (HNF) using one matrix $P$ for which a formula for the entries of $P$ is known. In many cases, for example when $\EE$ is a cubic field, the columns of the matrix $N_{\EE }^{ \left( \theta \right) }$ may be used to give an integral basis for the principal ideal $\mathfrak{a} = (\alpha )$ of $\mathcal{O}_{\EE }$. It is often convenient to put such matrices in HNF in order to compare the ideals that they describe.  

Again let $\EE = \Q (\zeta )$, where $\zeta $ is a root of the irreducible polynomial $$f(x) = \sum_{k = 1}^{n} a_k x^{n+1-k} $$ of degree $n$ with coefficients $a_k \in \Z$. If $\Delta $ is the discriminant of a field of degree $n$, and the discriminant of the polynomial $f(x)$ is equal to $\Delta$, then it is reasonable to suspect that $\mathcal{A} = \{ \rho_j \}_{j = 0}^{n-1}$ is an integral basis for the ring of integers $\mathcal{O}_{\EE }$, where $\rho_0 = 1$ and if $j > 0$, then $\rho_j = \sum_{k = 1}^{j} a_k \zeta^{j+1-k}$. When this is indeed the case, as we will see in Theorem \ref{inbasthm}, we can then write the algebraic integers $\alpha \in \mathcal{O}_{\EE }$ in the form $\alpha = \sum_{k = 0}^{n-1} x_k \rho_k $. Noting that when $n = 2$, we can define $N_{\EE }^{(\alpha )} = \left(
\begin{array}{cc}
 x_0 & -a_1 a_3 x_1 \\
 x_1 & x_0-a_2 x_1 \\
\end{array}
\right) $. The $n \times n$ arithmetic matrices $N_{\EE }^{(\alpha )}$ for $\EE$ equal to $\LL = \Q (\sqrt{D})$ (a quadratic field) or $\K = \Q (\delta )$ (a cubic field) satisfy $N_{\EE }^{(\alpha )} = \left[ a_{ij} \right]$, where
\begin{eqnarray}
\label{defaijonea4} a_{11} & = & x_0 , \\
\label{defaijtwoa4} a_{i1} & = & x_{i-1} , \ \text{for} \ i > 1 , \\
\label{defaijthreeea4} a_{1j} & = & - a_{n+1} \sum_{k = 1}^{j-1} a_k x_{k+n-j}, \ \text{for} \ j > 1 , \\
\label{defaijfoura4} a_{ij} & = &  \sum_{k = 1}^{j-1} a_k x_{k+i-j-1} \ \text{for} \ i > j > 1 , \\
\label{defaijfivea4} a_{ij} & = & \delta_{ij} x_0 - \sum_{k = j}^{m_{ij}} a_k x_{k+i-j-1} \ \text{for} \ j \geq i > 1 , \\
\nonumber m_{ij} & = & \min (n - i + j, n + 1) .
\end{eqnarray}
The Kronecker-delta symbol is denoted by $\delta_{ij}$ here defined in Proposition \ref{mainres}. 

If we now permit $n$ to exceed $3$, then remarkably the $N_{\EE }^{(\alpha )}$ defined in this way commute. For example, if we choose $n = 5$, then we can use the definition of the entries $a_{ij}$ of $N_{\EE }^{(\alpha )}$ to construct the following $5 \times 5$ arithmetic matrix for the quintic field $\EE $, where we have again replaced $a_1 = a$, $a_2 = b$, $a_3 = c$, $a_4 = d$, $a_5 = e$, $a_6 = f$, $x_0 = u$, $x_1 = x$, $x_2 = y$, $x_3 = z$, $x_4 = w$. 
\tiny 
\begin{equation*}
\left(
\begin{array}{ccccc}
 u & -a f w & -f (b w+a z) & -f (c w+a y+b z) & -f (d w+a x+b y+c z) \\
 x & u-e w-b x-c y-d z & -f w-c x-d y-e z & -d x-e y-f z & -e x-f y \\
 y & a x & u-e w-c y-d z & -f w-d y-e z & -e y-f z \\
 z & a y & a x+b y & u-e w-d z & -f w-e z \\
 w & a z & a y+b z & a x+b y+c z & u-e w \\
\end{array}
\right) .
\end{equation*}
\normalsize 
A calculation shows that $N_{\EE }^{(\alpha )} N_{\EE }^{(\beta )} - N_{\EE }^{(\beta )} N_{\EE }^{(\alpha )} = [0]$, the $5 \times 5$ matrix with zero entries. 

The following lemma is helpful in completing the proof of Proposition \ref{mainres}, as Item 7 of the statement is non-trivial.

\begin{lemma}\label{samematr}
Let $M_{\EE}^{(\alpha )}$ be defined by the equations \eqref{defaijonea4} to \eqref{defaijfivea4}, where $\EE$ is a number field of degree $n$, and let $N_{\EE}^{(\alpha )} = \Gamma_{\EE}^{-1} \Theta_{\EE }^{(\alpha )} \Gamma_{\EE }$. Then $M_{\EE}^{(\alpha )} = N_{\EE}^{(\alpha )}$.
\end{lemma}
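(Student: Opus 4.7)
The plan is to identify $N_\EE^{(\alpha)} = \Gamma_\EE^{-1}\Theta_\EE^{(\alpha)}\Gamma_\EE$ as the matrix of left multiplication by $\alpha$ with respect to the ordered basis $(\rho_0,\rho_1,\ldots,\rho_{n-1})$, and then verify directly that the combinatorial formulas \eqref{defaijonea4}--\eqref{defaijfivea4} encode this same matrix. If we define $C = [c_{\ell j}]$ by the $\rho$-basis expansion $\alpha\rho_{j-1} = \sum_{\ell=1}^n c_{\ell j}\rho_{\ell-1}$, then applying each embedding $\kappa_{i-1}$ yields $\kappa_{i-1}(\alpha)\kappa_{i-1}(\rho_{j-1}) = \sum_{\ell} c_{\ell j}\kappa_{i-1}(\rho_{\ell-1})$, which is exactly the matrix identity $\Theta_\EE^{(\alpha)}\Gamma_\EE = \Gamma_\EE C$. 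Hence $C = N_\EE^{(\alpha)}$, and the lemma reduces to showing that, with $\alpha = \sum_{k=0}^{n-1}x_k\rho_k$, the coefficient of $\rho_{i-1}$ in $\alpha\rho_{j-1}$ equals the entry $a_{ij}$ specified by the explicit formulas.

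Both $\alpha\rho_{j-1}$ and each $a_{ij}$ are linear in $\alpha$ (respectively in the indeterminates $x_k$), so it is enough to fix $k \in \{0,1,\ldots,n-1\}$ and check that the coefficient of $x_k$ in $a_{ij}$ agrees with the coefficient of $\rho_{i-1}$ in the product $\rho_k\rho_{j-1}$. The needed products are computed from the defining formula $\rho_j = \sum_{\ell=1}^{j} a_\ell \zeta^{j+1-\ell}$ together with two structural relations: the shift identity $\zeta\rho_{j-1} = \rho_j - a_j\zeta$, valid for $j\geq 2$, and the reduction $\zeta\rho_{n-1} = -a_n\zeta - a_{n+1}$ obtained by substituting $a_1\zeta^n = -(a_2\zeta^{n-1} + \cdots + a_{n+1})$ from $f(\zeta)=0$. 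Iterating these two rules rewrites every $\zeta^r\rho_{j-1}$ in the $\rho$-basis, and expanding $\rho_k$ as a $\Z[a_1,\ldots,a_k]$-linear combination of $\zeta,\zeta^2,\ldots,\zeta^k$ then presents $\rho_k\rho_{j-1}$ as an explicit $\rho$-sum whose coefficients can be read off.

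The main obstacle is the case analysis governing when a term requires reduction via $f(\zeta)=0$. When $k+j-1 \leq n-1$ no such reduction occurs, producing the sub-diagonal formula \eqref{defaijfoura4} with its clean summation $\sum_{k=1}^{j-1} a_k x_{k+i-j-1}$. When $k+j-1 \geq n$ each excess power $\zeta^n, \zeta^{n+1}, \ldots$ must be rewritten, the constant tail proportional to $-a_{n+1}$ spills into the first row \eqref{defaijthreeea4}, and the remaining reduced terms populate \eqref{defaijfivea4}, with the cutoff $m_{ij}=\min(n-i+j,n+1)$ marking exactly where further reduction ceases to contribute. Matching the shift $k+i-j-1$ in the $x$-index and this cutoff against the corresponding summands is the bookkeeping heart of the proof; it is naturally organized as an induction on $k$ (or equivalently on the maximal power of $\zeta$ appearing), and can be cross-checked in low degree against the explicit $n=3$ matrix $N_\K^{(\alpha)}$ and the $5 \times 5$ quintic example already displayed. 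Once these coefficient identities are established for every $k$, linear combination against $\alpha = \sum_k x_k\rho_k$ completes the identification $M_\EE^{(\alpha)} = N_\EE^{(\alpha)}$.
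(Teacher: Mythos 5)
Your proof is correct in outline and takes a genuinely different, though closely related, route from the paper's. You identify $N_{\EE}^{(\alpha )} = \Gamma_{\EE}^{-1}\Theta_{\EE}^{(\alpha )}\Gamma_{\EE}$ as the matrix of multiplication by $\alpha$ in the ordered basis $(\rho_0,\dots ,\rho_{n-1})$ via the identity $\Theta_{\EE}^{(\alpha )}\Gamma_{\EE} = \Gamma_{\EE}C$, and then check that the explicit formulas compute the structure constants of $\rho_k\rho_{j-1}$ directly in the $\rho$-basis, using the shift identity $\zeta\rho_{j-1} = \rho_j - a_j\zeta$ together with the reduction $\zeta\rho_{n-1} = -a_n\zeta - a_{n+1}$ coming from $f(\zeta )=0$. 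The paper instead exploits the factorization $\Gamma_{\EE} = \Xi A$ (with $\Xi$ the Vandermonde matrix of the roots and $A$ the coefficient matrix), sets $G_q = A M_{\EE}^{(\rho_q)} A^{-1}$, observes that $G_q$ is the transpose of a Sylvester-type matrix, and verifies the single identity $\Xi G_q = \Theta_{\EE}^{(\rho_q)}\Xi$, which amounts to polynomial evaluation at the roots $\zeta_i$ reduced modulo $f$. Both arguments reduce the lemma, by linearity in $\alpha$, to the basis elements $\rho_q$ and then to a combinatorial verification; yours stays entirely in the integral basis, which makes the ring structure of $\Z [\rho_0,\dots ,\rho_{n-1}]$ and the integrality of the entries more transparent, while the paper's conjugation by $A$ trades your case analysis on when reduction by $f(\zeta )=0$ occurs for the recognizable banded shape of $G_q$. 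In both write-ups the final coefficient-matching is asserted rather than carried out in full detail; your description of the cutoff $m_{ij}$ and of the $-a_{n+1}$ tail spilling into the first row shows you have the right picture, but to make the argument airtight you would still need to execute the induction on $k$ that you only sketch.
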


\begin{proof}
We show that the result holds for each $\alpha_j = x_j \rho_j$, where $\rho_j$ is given by $\rho_j = \sum_{k = 1}^{j} a_k \zeta^{j+1-k} \ (j \geq 1) , \ \rho_0  = 1$, and then use the additive property of each matrix, $N_{\EE}^{\left( \alpha \right) } = \sum_{j = 0}^{n-1} N_{\EE}^{\left( \alpha_j \right) } = N_{\EE}^{\left( \sum_{j = 0}^{n-1} \alpha_j \right) }$. First, we have 
\begin{equation*}
N_{\EE}^{\left( \rho_0 \right) } = N_{\EE}^{\left( 1 \right) } = \Gamma_{\EE}^{-1} \Theta_{\EE}^{ \left( 1 \right) } \Gamma_{\EE} = \left[ \delta_{ij} \right] .
\end{equation*} 
Since the equations describing $M_{\EE}^{ \left( \rho_0 \right) }$ give the same result, we see that by multiplying by $x_0$, we have $M_{\EE}^{ \left( \alpha_0 \right) } = N_{\EE}^{\left( \alpha_0 \right) }$. 

Now we consider the $\alpha_q$ for $q = 1, 2, \dots , n-1$. First observe that
\begin{align}\label{gammadecomp2}
 \Gamma_{\EE} & = \Xi A, 
\end{align} 
where $\zeta_0 = \zeta$, $\zeta_i$ are the roots of $f(x)$, and $\Xi = \left[ \zeta_{i-1}^{j-1} \right] $, and $A$ was defined just after \eqref{gammadecomp}. We define $G_q = A M_{\EE}^{\left( \rho_q \right) } A^{-1}$ and construct a formula for $G_q$ for each $q = 1, \dots, n-1$. We must show that for $q > 1$, $\Xi G_q = \Theta^{\left( \rho_q \right) } \Xi $. Clearly
\begin{equation}\label{pmfgi}
\Theta^{\left( \rho_q \right) } \Xi = \left[ \sum_{k=1}^{q} a_k \zeta_{i-1}^{j+q-k} \right] .
\end{equation}
Now $G_q = \left( R_q \ S_q \right)$, where 
\begin{align*}
R_q & = \left(
\begin{array}{cccc}
 0 & 0 & \dots & 0 \\
 a_q & 0 & \dots & 0 \\
 a_{q-1} & a_q & \dots & 0 \\
 a_{q-2} & a_{q-1} & \dots & 0 \\
 \vdots & \vdots & \ddots & \vdots \\
 a_1 & a_2 & \ddots & 0 \\
 0 & a_1 & \ddots & a_q \\
 0 & 0 & \ddots & a_{q-1} \\
 \vdots & \vdots & \ddots & \vdots \\
 0 & 0 & \ddots & a_2 \\
 0 & 0 & \dots & a_1 \\
\end{array}
\right) , & S_q & = \left(
\begin{array}{cccc}
 -a_{n+1}     & 0        & \dots  & 0 \\
 -a_{n} & -a_{n+1}   & \dots  & 0 \\
 -a_{n-1} & -a_{n} & \dots  & 0 \\
 \vdots   & \vdots   &    & \vdots \\
 -a_q      & -a_{q+1}   & \ddots & 0 \\
 0        & -a_q      & \ddots & 0 \\
 0        & 0        & \ddots & -a_{n+1} \\
 0        & \vdots   & \ddots & -a_{n} \\
 \vdots   & \vdots   & \ddots & \vdots \\
 0        &        0 & \ddots & -a_q \\
\end{array}
\right) ,
\end{align*}
$R_q$ is an $n \times (n-q)$ matrix, and $S_q$ is an $n \times q$ matrix. If we let 
\begin{eqnarray*}
f_{q+1} (x) & = & \varepsilon x^{q+1} + a_q x^{q} + a_{q-1} x^{q-1} + \dots + a_{1} , \\
f_{n+1} (x) & = & - a_{n+1} x^{n+1} - a_n x^{n} - a_{n-1} x^{q-1} - \dots - a_{1} ,
\end{eqnarray*}
then the Sylvester matrix associated with the resultant of the polynomials $f_{q+1} (x)$ and $f_{n+1} (x)$ is equal to the transpose of the matrix $G_q$, once we replace $\varepsilon$ with $0$. It is easy to see that since $a_1 \zeta^{n} + a_2 \zeta^{n-1} + \dots + a_{n+1} = 0$, we have 
\begin{equation*}
\Xi G_q = \left[ \sum_{k=1}^{q} a_k \zeta_{i-1}^{j+q-k} \right] .
\end{equation*}
Since this coincides with \eqref{pmfgi}, we have $\Xi G_q = \Theta^{\left( \rho_q \right) } \Xi$. It follows that 
\begin{eqnarray*}
N_{\EE}^{\left( \rho_j \right) } & = & \Gamma_{\EE}^{-1} \Theta_{\EE}^{ \left( \rho_j \right) } \Gamma_{\EE} = A^{-1} \Xi^{-1} \Theta_{\EE}^{ \left( \rho_j \right) } \Xi A , \\
                                  & = & A^{-1} G_j A  = A^{-1} A M_{\EE}^{\left( \rho_j \right) } A^{-1} A , \\ 
                                  & = & M_{\EE}^{\left( \rho_j \right) } .
\end{eqnarray*} 
Multiplying by $x_j$, we have $N_{\EE}^{\left( x_j \rho_j \right) } = M_{\EE}^{\left( x_j \rho_j \right) }$. Taking sums from $j = 0$ to $j = n - 1$, we obtain $M_{\EE}^{(\alpha )} = N_{\EE}^{(\alpha )}$.
\end{proof}

The following result generalizes Belabas' integral basis of a cubic field \cite{Belabas2}. In Section \ref{essent} we will consider the existence of the binary forms satisfying the conditions of the theorem. 

\begin{theorem}\label{inbasthm}
Let 
\begin{equation}\label{eqfpolthm}
f(x) = a_1 x^n + a_2 x^{n-1} + \dots a_{n} x + a_{n+1}  
\end{equation}
be an irreducible polynomial of degree $n$ in $\Z [x]$ such that the discriminant of $f(x)$ is equal to the discriminant $\Delta $ of a number field $\EE = \Q (\zeta )$ of degree $n$ over $\Q $, where $f(\zeta ) = 0$. Then an integral basis for the ring of integers of $\EE$ is given by
\begin{align}\label{intbasn}
\mathcal{O}_{\EE} & = \left\{ \rho_0 , \rho_1 , \rho_2 , \dots , \rho_{n-1} \right\} , & \rho_j & = \sum_{k = 1}^{j} a_k \zeta^{j+1-k} \ (j \geq 1) , & \rho_0 & = 1.
\end{align}
\end{theorem}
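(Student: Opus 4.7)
The plan is to verify two intermediate facts and then invoke the standard discriminant-index criterion: I will show that each $\rho_j \in \mathcal{O}_\EE$, and that the lattice $L := \Z\rho_0 + \cdots + \Z\rho_{n-1}$ has discriminant exactly $\Delta$. Once both are established, $L$ is a full-rank sublattice of $\mathcal{O}_\EE$ satisfying $[\mathcal{O}_\EE : L]^2 = \operatorname{disc}(L)/\Delta = 1$, forcing $L = \mathcal{O}_\EE$, which is the desired conclusion.

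For integrality of the $\rho_j$ I will invoke Lemma \ref{samematr}. Fixing $j \in \{0, 1, \ldots, n-1\}$ and substituting $x_j = 1$ with $x_k = 0$ for $k \neq j$ into \eqref{defaijonea4}--\eqref{defaijfivea4} produces a matrix $M_\EE^{(\rho_j)}$ whose entries are $\Z$-linear combinations of products of $a_1, \ldots, a_{n+1}$, so in particular $M_\EE^{(\rho_j)} \in M_n(\Z)$. By Lemma \ref{samematr} this matrix equals $N_\EE^{(\rho_j)} = \Gamma_\EE^{-1}\Theta_\EE^{(\rho_j)}\Gamma_\EE$, whose eigenvalues are the Galois conjugates $\kappa_0(\rho_j), \ldots, \kappa_{n-1}(\rho_j)$. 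Thus the characteristic polynomial of $M_\EE^{(\rho_j)}$ is a monic integer polynomial that annihilates $\rho_j$, certifying $\rho_j \in \mathcal{O}_\EE$. I expect this to be the main obstacle; the virtue of Lemma \ref{samematr} is precisely that it supplies a uniform integer matrix representation of each $\rho_j$ together with a monic annihilator, avoiding the ad hoc inductive constructions that would otherwise be needed.

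For the discriminant of $L$ I will use the factorisation $\Gamma_\EE = \Xi A$ from \eqref{gammadecomp2}, in which $\Xi$ is the Vandermonde matrix in $\zeta_0, \ldots, \zeta_{n-1}$ and $A$ is upper triangular with diagonal entries $(1, a_1, a_1, \ldots, a_1)$, so $\det(A) = a_1^{n-1}$. Taking determinants,
\begin{equation*}
\operatorname{disc}(L) = \det(\Gamma_\EE)^2 = \det(\Xi)^2 \det(A)^2 = a_1^{2n-2} \prod_{i<j}(\zeta_j - \zeta_i)^2 = \operatorname{disc}(f),
\end{equation*}
which equals $\Delta$ by hypothesis. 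Since $\Delta \neq 0$ also forces the $\Q$-linear independence of the $\rho_j$, so that $L$ has rank $n$, the index argument of the first paragraph now applies and the theorem follows.
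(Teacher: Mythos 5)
Your proposal is correct and follows essentially the same route as the paper: both establish integrality of the $\rho_j$ by using Lemma \ref{samematr} to exhibit an integer matrix whose monic characteristic polynomial annihilates $\rho_j$, compute $\det(\Gamma_\EE)^2 = a_1^{2n-2}\prod_{i<j}(\zeta_i-\zeta_j)^2 = \Delta$ via the factorisation $\Gamma_\EE = \Xi A$, and conclude by the discriminant--index criterion that the order generated by the $\rho_j$ is maximal. The only difference is cosmetic (the paper phrases the last step as ``an order of discriminant equal to the field discriminant is maximal'' rather than via the index formula $[\mathcal{O}_\EE : L]^2 = \operatorname{disc}(L)/\Delta$).
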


\begin{proof}
Since \eqref{gammadecomp} holds, where $\zeta_{i}$ are the roots of $f(x)$ given by \eqref{eqfpolthm}, taking determinants of the matrices in \eqref{gammadecomp} and squaring the result gives 
\begin{equation}\label{disceqn}
\Delta = \det \left( \Gamma_{\EE} \right)^2 = a_{1}^{2 n-2} \prod_{i < j} \left( \zeta_{i} - \zeta_{j} \right)^2 . 
\end{equation}

Assume that there exist $p_0, p_1, \dots , p_{n-1} \in \Q$ such that $\sum_{j=0}^{n-1} p_j \rho_j = 0$. Then applying the embeddings $\kappa_i$ of $\EE$ in $\C$, we have 
\begin{equation*}
\Gamma_{\EE} \left(
\begin{array}{cccc}
 p_0 & p_1 & \dots & p_{n-1} \\
\end{array}
\right)^T = \left(
\begin{array}{cccc}
 0 & 0 & \dots & 0 \\
\end{array}
\right)^T . 
\end{equation*}
Since $\Gamma_{\EE}$ is invertible, we must have $p_j = 0$ for $j = 0, 1, \dots, n-1$. It follows that $\left\{ \rho_j \right\}_{j=0}^{n-1}$ is a basis for $\EE$ over $\Q$. 

It is easy to verify that $\rho_1, \rho_2, \dots \rho_{n-1}$ are respectively roots of the monic polynomials 
\begin{eqnarray*}
f_j(x) & = & \det \left( N_{\EE}^{\left( x - \rho_j \right) } \right) \ (j = 1, 2, \dots n-1) , \\
       & = & \det \left( \Gamma_{\EE}^{-1} \Theta_{\EE}^{ \left( x - \rho_j \right) } \Gamma_{\EE} \right) = \prod_{j=0}^{n-1} \left( x - \rho_j \right) .
\end{eqnarray*}
By Lemma \ref{samematr}, the coefficients of $f_j(x)$ are rational integers. It follows that $\rho_1$, $\rho_2, \dots$, $\rho_{n-1} \in \mathcal{O}_{\EE}$.

Let $\mathcal{O} $ be the set of all $\alpha = \sum_{j=0}^{n-1} x_j \rho_j $ such that for each $j = 0, 1, \dots , n - 1$, we have $x_j \in \Z$. Clearly $\mathcal{O}$ is a sub-module of $\EE$ and $\mathcal{O}$ is a subring of $\EE$. $\mathcal{O}$ contains $1 \ ( = \rho_0 ) $ and a basis of $\EE$ over $\Q$. Therefore $\mathcal{O}$ is an order of $\EE$ and, further, $\mathcal{O} \subseteq \mathcal{O}_{\EE}$. We have already shown in \eqref{disceqn} that the discriminant of $\mathcal{O}$ is equal to $\Delta$, the discriminant of the polynomial $f(x)$. Thus if $\Delta$ is the discriminant of the field $\EE$, then $\mathcal{O}$ is the maximal order of $\EE$ and we have $\mathcal{O} = \mathcal{O}_{\EE}$. 
\end{proof}

It is worth mentioning that there is slight down-side to a general formula for the integral basis of a number field. With respect to Theorem \ref{inbasthm}, one must find a generating polynomial $f(x)$ that has the polynomial discriminant equal to that of a number field $\EE = \Q (\zeta )$ of the same degree as $f$, with $f(\zeta ) = 0$. Alternatively, when one does not exist, we must find a binary form satisfying other conditions that we consider in Section \ref{essent}. Further, occasionally $\Z [\zeta ]$ is the ring of integers of $\EE$, which has a simpler description than the basis obtained in Theorem \ref{inbasthm}. For example, if $\zeta_n$ is a primitive $n$-th root of unity, then the ring of integers of the cyclotomic field $\Q \left( \zeta_n \right)$ is simply $\Z \left[ \zeta_n \right]$, see \cite[pp. 11]{Washington2}, which does not disagree with Theorem \ref{inbasthm}. 

\begin{corollary}\label{cortothm}
The entries of $N_{\EE}^{(\alpha )}$ are rational integers if and only if $\alpha \in \mathcal{O}_{\EE}$. 
\end{corollary}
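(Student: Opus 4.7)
The plan is to reduce the corollary immediately to the explicit formulas for $M_{\EE}^{(\alpha)}$ via Lemma \ref{samematr}, and then observe that the first column of the matrix literally reads off the coordinates of $\alpha$ in the integral basis $\{\rho_0,\dots,\rho_{n-1}\}$.

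First, since $\alpha \in \EE$ and by Theorem \ref{inbasthm} the set $\{\rho_0, \dots, \rho_{n-1}\}$ is a $\Q$-basis of $\EE$, I can uniquely write $\alpha = \sum_{j=0}^{n-1} x_j \rho_j$ with $x_j \in \Q$. Lemma \ref{samematr} then gives $N_{\EE}^{(\alpha)} = M_{\EE}^{(\alpha)}$, so each entry of $N_{\EE}^{(\alpha)}$ is a $\Z$-polynomial expression in $a_1,\dots,a_{n+1},x_0,\dots,x_{n-1}$ as prescribed by \eqref{defaijonea4}--\eqref{defaijfivea4}.

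For the forward direction, I would invoke Theorem \ref{inbasthm}: if $\alpha \in \mathcal{O}_{\EE}$, then the coefficients $x_j$ lie in $\Z$. Since the $a_k$ also lie in $\Z$ and each $a_{ij}$ is a $\Z$-linear combination of products of the $x_j$ and $a_k$, all entries of $N_{\EE}^{(\alpha)}$ are rational integers.

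For the converse, suppose every entry of $N_{\EE}^{(\alpha)}$ lies in $\Z$. From \eqref{defaijonea4} and \eqref{defaijtwoa4}, the entries of the first column are precisely
\begin{equation*}
a_{11} = x_0, \qquad a_{i1} = x_{i-1} \ \text{for } i > 1,
\end{equation*}
so each coordinate $x_j$ must itself be a rational integer. Therefore $\alpha = \sum_{j=0}^{n-1} x_j \rho_j$ is a $\Z$-linear combination of integral basis elements and hence lies in $\mathcal{O}_{\EE}$. There is no serious obstacle: once Lemma \ref{samematr} is available, everything follows by inspection of the first column of the explicit matrix $M_{\EE}^{(\alpha)}$.
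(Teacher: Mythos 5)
Your proof is correct, and the direction ``$\alpha \in \mathcal{O}_{\EE}$ implies integral entries'' is exactly the paper's argument. The other direction, however, is handled by a genuinely different (and more elementary) route. You invoke Lemma \ref{samematr} to identify $N_{\EE}^{(\alpha)}$ with the explicit matrix $M_{\EE}^{(\alpha)}$ and then simply read the coordinates $x_0,\dots,x_{n-1}$ off the first column via \eqref{defaijonea4}--\eqref{defaijtwoa4}, so integrality of the entries forces $x_j \in \Z$ and hence $\alpha \in \mathcal{O}_{\EE}$ because $\{\rho_j\}$ is an integral basis. The paper instead works with the defining identity $\Gamma_{\EE} N_{\EE}^{(\alpha)} = \Theta_{\EE}^{(\alpha)}\Gamma_{\EE}$: since the $\rho_j$ are algebraic integers, an integral $N_{\EE}^{(\alpha)}$ makes every entry of $\Gamma_{\EE} N_{\EE}^{(\alpha)}$ an algebraic integer, and $\alpha$ appears as the $(1,1)$ entry of $\Theta_{\EE}^{(\alpha)}\Gamma_{\EE}$, so $\alpha$ is an algebraic integer. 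Your version is shorter and needs only the (trivial) first-column formulas plus the fact that $\{\rho_j\}$ generates all of $\mathcal{O}_{\EE}$ over $\Z$; the paper's version buys independence from the explicit entry formulas --- it applies verbatim to any matrix satisfying the diagonalization identity with respect to a basis of algebraic integers, which is convenient when the formulas are modified as in Section \ref{essent}. Both are complete proofs; just make sure you cite the uniqueness of the expression $\alpha = \sum_j x_j \rho_j$ over $\Q$ (which you do) so that the $x_j$ appearing in $M_{\EE}^{(\alpha)}$ are well defined.
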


\begin{proof}
Since the $\rho_j = \sum_{k = 1}^{j} a_k \zeta^{j+1-k} \ (j \geq 1) , \ \rho_0  = 1$ are all algebraic integers, if the entries of $N_{\EE}^{(\alpha )}$ are rational integers, then $\Gamma_{\EE} N_{\EE}^{(\alpha )}$ has entries in $\mathcal{O}_{\EE }$. It follows that $\Theta_{\EE}^{(\alpha )} \Gamma_{\EE} \ \left( = \Gamma_{\EE} N_{\EE}^{(\alpha )} \right)$ has entries in $\mathcal{O}_{\EE }$. Notice that $\alpha $ is the element in the first row and first column of $\Theta_{\EE}^{(\alpha )} \Gamma_{\EE}$, which must belong to $\mathcal{O}_{\EE}$. Conversely, if $\alpha \in \mathcal{O}_{\EE}$, then the coefficients $x_j$ of $\rho_j$ in the expression of $\alpha $ in terms of the integral basis \eqref{intbasn} must be rational integers. By Lemma \ref{samematr}, the entries of $N_{\EE}^{(\alpha )}$ are rational integers.
\end{proof}

Once we observe that in general the indeterminants $x_j$ may belong to a commutative ring $\mathcal{R}$ in order for the results in this section to hold, Proposition \ref{mainres} follows. 

\section{Quartic fields}\label{five}

In this section, as an example, we will consider the homogeneous irreducible polynomial $\mathcal{V}(x, y)$ over $\Q$ with coefficients $a,b,c,d,e$ in $\Z$ given by 
\begin{equation*}
\mathcal{V}(x, y) = a x^4 + b x^3 y + c x^2 y^2 + d x y^3 + e y^4 .
\end{equation*} 
For similar examples for quadratic and cubic fields, see \cite{hamblbrahm2,cfg2}. Let $\zeta $ be a root of $\mathcal{V}(x, 1)$ and assume that the field $\EE = \Q (\zeta )$ has discriminant $\Delta $, where 
\begin{eqnarray}
\label{discquad} \Delta & = & 256 a^3 e^3-192 a^2 b d e^2-128 a^2 c^2 e^2+144 a^2 c d^2 e-27 a^2 d^4 \\
\nonumber        &   & +144 a b^2 c e^2-6 a b^2 d^2 e-80 a b c^2 d e+18 a b c d^3+16 a c^4 e \\
\nonumber        &   & -4 a c^3 d^2-27 b^4 e^2+18 b^3 c d e-4 b^3 d^3-4 b^2 c^3 e+b^2 c^2 d^2 
\end{eqnarray}
is the discriminant of the binary quartic form $\mathcal{V}(x, y)$. We know that 
\begin{equation*}
\mathcal{A} = \left\{ 1 , \rho_1 , \rho_2 , \rho_3 \right\} = \left\{ 1 , a \zeta , a \zeta^2 + b \zeta , a \zeta^3 + b \zeta^2 + c \zeta \right\}
\end{equation*}
is an integral basis for the ring of integers $\mathcal{O}_{\EE}$ by Theorem \ref{inbasthm}. We can now exhibit the arithmetic matrix $N_{\EE }^{(\theta )}$, where $\alpha = u + x \rho_1 + y \rho_2 + z \rho_3 $, as 
\begin{equation}\label{arith4}
N_{\EE}^{(\alpha )} = \left(
\begin{array}{cccc}
 u & -a e z & -e (a y + b z) & -e (a x + b y + c z) \\
 x & u-b x-c y-d z & -c x - d y - e z & - d x - e y \\
 y & a x & u - c y - d z & - d y - e z \\
 z & a y & a x + b y & u - d z \\
\end{array}
\right) .
\end{equation}
We clearly see a formula for the trace of $\alpha $ is given by 
\begin{equation*}
\text{Tr}(\alpha ) = 4 u - b x - 2 c y - 3 d z .
\end{equation*}
Letting $t = \text{Tr}(\alpha )$, we have 
\begin{equation}\label{equu}
u = \frac{1}{4} (t + b x + 2 c y + 3 d z) .
\end{equation}
Taking the determinant of $N_{\EE}^{(\alpha )} $ shows that the norm of $\alpha $ may be expressed as a homogeneous quartic polynomial in $u,x,y,z$. Replacing $u$ by the right hand side of \eqref{equu} in the equation $N_{\EE / \Q }(\alpha ) = 1$, we find the equation
\begin{equation}\label{quartnorm}
t^4 - 2 \mathcal{G} t^2 - 8 \mathcal{H} t  + \mathcal{F} = 256 ,
\end{equation}
where 
\begin{eqnarray*}
\mathcal{G}(x, y, z) & = & (3 b^2 - 8 a c) x^2 + (4 b c - 24 a d) x y + (4 c^2 - 8 b d - 16 a e) y^2 \\
                     &   & + (2 b d - 32 a e) x z + (4 c d - 24 b e) y z + (3 d^2 - 8 c e) z^2 ,
\end{eqnarray*}
$\mathcal{H}(x, y, z)$ is a homogeneous ternary cubic polynomial, and $\mathcal{F}(x, y, z)$ is a homogeneous ternary quartic polynomial. Let $I$ and $J$ denote the invariants
\begin{align*}
I & = 12 a e - 3 b d + c^2, & J & = 72 a c e + 9 b c d - 27 a d^2 - 27 b^2 e - 2 c^3 .
\end{align*}
The ternary forms $\mathcal{F}(x, y, z)$, $\mathcal{G}(x, y, z)$, $\mathcal{H}(x, y, z)$ satisfy the syzygy 
\begin{equation*}
g_4^3 - 48 g_4 I v^2 - 64 J v^3 = 27 g_6^2 
\end{equation*}
of classical invariant theory; see \cite{Cremona2}, where $g_4 = \mathcal{G} (x^2, x, 1)$, $g_6 = \mathcal{H} (x^2, x, 1)$, and $v = \mathcal{V} (x, 1)$. Notice that 
\begin{equation*}
\mathcal{G} \left( x^2,x y,y^2 \right) = - \frac{1}{3} \det \left(
\begin{array}{cc}
 \mathcal{V}_{xx} & \mathcal{V}_{xy} \\
 \mathcal{V}_{yx} & \mathcal{V}_{yy} \\
\end{array}
\right) .
\end{equation*}
These results are analogous to similar identities relating the norm of an algebraic integer of a cubic field to covariants of a binary cubic form in Cayley's syzygy $\mathcal{F}^2 + 27 \Delta \mathcal{C}^2 = 4 \mathcal{Q}^3$, where in this case, $\mathcal{C}(x ,y) = (a,b,c,d)$ defines the cubic field $\K = \Q(\delta )$ of discriminant $\Delta $, $\delta$ is a real root of $\mathcal{C}(x ,1)$, $\mathcal{Q}$ is the Hessian of $\mathcal{C}$, and $\mathcal{F}$ is the Jacobian of $\mathcal{C}$. Algebraic integers $u + x \rho_1 + y \rho_2$ of norm $1$ in a cubic field are in one to one correspondence with solutions to the equation $t^3 - 3 t \mathcal{Q} + \mathcal{F} = 27$.

In \cite{cfg2}, the binary cubic forms $(a,b,c,d)$ that were used to define a cubic field were often considered to be reduced. This means that we can assume that $a, b$ satisfy certain bounds, which is helpful in proving results about the implementation of Voronoi's algorithm; see \cite{DnF12,cfg2}. Likewise, binary quartic forms can be reduced, as Cremona \cite{Cremona2} has shown. This may be helpful in discussions on the geometry of numbers in which a quartic field is involved.   

\section{Existence of essential forms}\label{essent}

An irreducible binary form of degree $n$ which has discriminant equal to that of a number field of degree $n$ has a very simple formula for an integral basis of the ring of integers of the number field irrespective of whether there is a power integral basis. This was demonstrated in Theorem \ref{inbasthm}. In this section we consider the question of the existence of such a binary form of degree $n$. 

Ash, Brakenhoff, and Zarrabi \cite{Ash} considered the probability that a randomly chosen irreducible monic polynomial of degree $n$ has polynomial discriminant equal to the discriminant of the number field of the same degree generated by one of the roots of the polynomial. If such a monic polynomials exists, then they called it essential. They  estimated that the probability that a randomly chosen irreducible monic polynomial of degree $n \ (\geq 2)$ and height $\leq X$ is essential approaches $\frac{6}{\pi^2} \ (\approx 0.608)$ as $X \longrightarrow \infty$. In this section we are concerned with such polynomials that are in general not monic. When $n = 3$, we can easily prove that there are always irreducible cubic polynomials in $\Z[x,y]$ whose discriminant is the same as a given cubic field. These are called index forms. However, when $n > 3$, index forms are no longer binary forms. For this reason we will refer to a binary form $\mathcal{B}(x, y) \in \Z [x, y]$ of degree $n$ as {\em essential} if the discriminant of $\mathcal{B}(x, y)$ is equal to the discriminant of a number field of degree $n$. It seems that essential forms do not always exist for number fields of degree $n \geq 4$. However, this does not present too many problems for our formula for the arithmetic matrices since we can easily adjust this to accommodate the required modifications.

We will now assume that we have an arbitrary integral basis for the ring of integers and attempt to compute an essential form from the basis. Let $\EE = \Q (\zeta )$, and let $\mathcal{A} = \left\{ 1, \omega_1 , \omega_2 , \dots , \omega_{n-1} \right\}$ be a basis for the ring of integers $\mathcal{O}_{\EE }$. Let $\kappa_{j}$ be the embeddings of $\EE $ in $\C$, where $\kappa_{j} \left( \omega_i \right) = \omega_{j}^{(i)}$, and let 
\begin{align}\label{biggamma}
\Gamma_{\EE } & = \left(
\begin{array}{ccccc}
 1 & \omega_1 & \omega_2 & \dots & \omega_{n-1} \\
 1 & \omega_1^{(1)} & \omega_2^{(1)} & \dots & \omega_{n-1}^{(1)} \\
 1 & \omega_1^{(2)} & \omega_2^{(2)} & \dots & \omega_{n-1}^{(2)} \\
 \vdots & \vdots & \vdots & \ddots & \vdots \\
 1 & \omega_1^{(n-1)} & \omega_2^{(n-1)} & \dots & \omega_{n-1}^{(n-1)} \\
\end{array}
\right) , & \Delta & = \det \left( \Gamma_{\EE } \right)^2 .
\end{align}
Then by definition, $\Delta $ is the discriminant of $\EE$. Now consider the irreducible binary form 
\begin{equation*}
\mathcal{B}(x, y) = a_1 x^n + a_2 x^{n-1} y + \dots a_{n+1} y^n ,
\end{equation*}
where $a_1 a_{n+1} \not= 0$. The discriminant of $\mathcal{B}(x, y)$ is given by 
\begin{equation*}
D_{\mathcal{B}} = a_1^{2(n-1)} \prod_{i < j} \left( \delta_i - \delta_j \right)^2 ,
\end{equation*}
where $\delta_i$ for $i = 1, 2, \dots n$ are the distinct roots of $\mathcal{B}(x, 1)$. Janson \cite{Janson} showed that the discriminant can be computed via the determinant of the $(2 n - 1) \times (2 n - 1)$ Sylvester matrix
\begin{equation*}
S = \left(
\begin{array}{ccccccc}
 a_1 & a_2 & a_3 & \dots & a_{n+1} & 0 & 0 \\
 0 & a_1 & a_2 & \dots & a_n & a_{n+1} & 0 \\
 0 & 0 & a_1 & \dots & a_{n-1} & a_{n} & a_{n+1} \\
 \vdots & \vdots & \vdots & \vdots & \vdots & \vdots & \vdots \\
 n a_1 & (n-1) a_2 & (n-2) a_3 & \dots & 0 & 0 & 0 \\
 0 & n a_1 & (n-1) a_2 & \dots & 0 & 0 & 0 \\
 \vdots & \vdots & \vdots & \ddots & \vdots & \vdots & \vdots \\
 0 & 0 & 0 & \dots & 2 a_{n-1} & a_{n} & 0 \\
 0 & 0 & 0 & \dots & 3 a_{n-2} & 2 a_{n-1} & a_n \\
\end{array}
\right) ,
\end{equation*}
so that 
\begin{equation*}
D_{\mathcal{B}} = \frac{(-1)^{s}}{a_1} \det (S) ,
\end{equation*}
where $s = 1$ if $n \equiv 2, 3 \pmod{4}$ and $s = 0$ if $n \equiv 0, 1 \pmod{4}$.

For cubic fields, Davenport and Heilbronn \cite{DH} proved that 
\begin{equation}\label{DHid}
\frac{1}{\sqrt{\Delta }} \prod_{i < j} \left( \left( \omega_1^{(i)} - \omega_1^{(j)} \right) x + \left( \omega_2^{(i)} - \omega_2^{(j)} \right) y \right) = \mathcal{B}(x, y)
\end{equation}
is a binary cubic form of discriminant $\Delta $. Notice that 
\begin{equation*}
\Gamma_{\EE }^{-1} = \frac{1}{\sqrt{\Delta }} \left(
\begin{array}{ccc}
 \omega_{1}^{(1)} \omega_{2}^{(2)} - \omega_{2}^{(1)} \omega_{1}^{(2)} & \omega_{1}^{(2)} \omega_{2} - \omega_{2}^{(2)} \omega_{1} & \omega_{2}^{(1)} \omega_{1} - \omega_{1}^{(1)} \omega_{2} \\
 \omega_{2}^{(1)} - \omega_{2}^{(2)} & \omega_{2}^{(2)} - \omega_{2} & \omega_{2} - \omega_{2}^{(1)} \\
 \omega_{1}^{(2)} - \omega_{1}^{(1)} & \omega_{1} - \omega_{1}^{(2)} & \omega_{1}^{(1)} - \omega_{1} \\
\end{array}
\right) .
\end{equation*}
If we multiply $\Gamma_{\EE }^{-1}$ on the left by $(0, y, x)$ and take the product the entries in the result, then we get the binary cubic form $\mathcal{B}(x, y)$. Alternatively, where $e_j$ are the basis vectors for $\R^n$, we have
\begin{equation*}
\mathcal{B}(x, y) = \det \left( \sum_{j = 1}^{n} e_j (0, y, x) \Gamma_{\EE }^{-1} e_j e_j^T \right) .
\end{equation*} 
To prove the claim about the discriminants in the identity \eqref{DHid}, we let 
\begin{align*}
p_{21} & = \omega_{2}^{(1)} - \omega_{2}^{(2)} , & p_{22} & = \omega_{2}^{(2)} - \omega_{2} , & p_{23} & = \omega_{2} - \omega_{2}^{(1)} , \\
p_{31} & = \omega_{1}^{(2)} - \omega_{1}^{(1)} , & p_{32} & = \omega_{1} - \omega_{1}^{(2)} , & p_{33} & = \omega_{1}^{(1)} - \omega_{1} .
\end{align*}
Then 
\begin{eqnarray*}
     \sqrt{\Delta } \mathcal{B}(x, y) & = & \prod_{i < j} \left( \left( \omega_1^{(i)} - \omega_1^{(j)} \right) x + \left( \omega_2^{(i)} - \omega_2^{(j)} \right) y \right) , \\
                              & = & \left( - p_{33} x + p_{23} y \right) \left( p_{32} x - p_{22} y \right) \left( - p_{31} x + p_{21} y \right) , \\
                              & = & A_1 x^3 + A_2 x^2 y + A_3 x y^2 + A_4 y^3 ,
\end{eqnarray*}
where 
\begin{align*}
A_1 & =  p_{31} p_{32} p_{33} , & A_2 & = - p_{23} p_{31} p_{32} - p_{21} p_{33} p_{32} - p_{22} p_{31} p_{33} , \\
A_4 & = - p_{21} p_{22} p_{23} , & A_3 & = p_{22} p_{23} p_{31}+p_{21} p_{23} p_{32}+p_{21} p_{22} p_{33} .
\end{align*}
Computing discriminants, 
\small 
\begin{eqnarray*}
\frac{(-1)^{s} \det \left( \Gamma_{\EE } \right)^4 }{a_1} \det (S) & = & \frac{- 1}{A_1} \det \left(
\begin{array}{ccccc}
 A_1 & A_2 & A_3 & A_4 & 0 \\
 0 & A_1 & A_2 & A_3 & A_4 \\
 3 A_1 & 2 A_2 & A_3 & 0 & 0 \\
 0 & 3 A_1 & 2 A_2 & A_3 & 0 \\
 0 & 0 & 3 A_1 & 2 A_2 & A_3 \\
\end{array}
\right) , \\
        & = & \left( p_{22} p_{31} - p_{21} p_{32} \right)^2 \left( p_{23} p_{31} - p_{21} p_{33}\right)^2 \left( p_{23} p_{32} - p_{22} p_{33}\right)^2 . 
\end{eqnarray*}
\normalsize 
Let $P = \det \left( \Gamma_{\EE } \right) \Gamma_{\EE }^{-1} $. Then $\det (P) P^{-1} = \det \left( \Gamma_{\EE } \right)^{n-2} \Gamma_{\EE } $. Matching the left columns shows that  
\begin{equation*}
p_{22} p_{33}-p_{23} p_{32} = p_{23} p_{31}-p_{21} p_{33} = p_{21} p_{32}-p_{22} p_{31} = \det \left( \Gamma_{\EE } \right) . 
\end{equation*}
It follows that 
\begin{equation*}
\frac{(-1)^{s} \det \left( \Gamma_{\EE } \right)^4 }{a_1} \det (S) = \det \left( \Gamma_{\EE } \right)^6 ,
\end{equation*}
and dividing by $\det \left( \Gamma_{\EE } \right)^4$ proves that there is a binary cubic form with discriminant equal to that of a cubic field. This suggests a similar approach for number fields of degree $n$. However, it is not immediately clear how to generalize this proof to fields of higher degree. The difficulty in generalizing the proof of the Davenport-Heilbronn correspondence indicates that some numerical calculations may illuminate the question. When $n = 4$, and we define $\mathcal{B}_{ij}(x, y) = \frac{1}{\Delta } \prod_{k = 1}^{4} \left( p_{ik} x - p_{j k} y \right)$, where $P = \det \left( \Gamma_{\EE } \right) \Gamma_{\EE}^{-1} = \left[ p_{ij} \right]$, we can show that the discriminant of $\mathcal{B}_{i j}(x, y)$ is equal to $\prod_{\ell < m} \left( \omega_{q - 1}^{(m-1)} - \omega_{q - 1}^{(\ell -1)} \right)^2$, the discriminant of an element $\omega_q$ of the set of generators of the given integral basis for $\mathcal{O}_{\EE }$, where $\{ i, j \} \cup \{ 1, q \} = \{ 1,2,3,4 \}$ and $\{ i, j \} \cap \{ 1, q \} = \emptyset$.

We will call the pair $\left[ a_0 , \ \mathcal{B}(x, y) \right]$ an {\em essential pair} if the discriminant of the irreducible binary form $\mathcal{B}$ of degree $n$ is equal to $\Delta a_0^2$ for some rational integer $a_0$ such that $a_0^2$ divides $a_1$ and $a_0$ divides $a_2$, where $\Delta $ is the discriminant of a number field of degree $n$. In Table \ref{quarticindf} we display essential pairs for quartic fields of discriminant $\Delta $. The table indicates that perhaps essential forms $\left( a_0 = 1 \right)$ for quartic fields exist whenever there is a power integral basis for $\mathcal{O}_{\EE}$. The exhibited forms $\mathcal{B}(x, y)$ in Table \ref{quarticindf} have not been reduced. 

%\url{http://www.lmfdb.org/NumberField/?start=0&degree=4&count=20}
We know with certainty that essential forms exist for number fields of degree less than $4$. Next we will generalize Theorem \ref{inbasthm} to accommodate the possibility that there may be no essential form for a given number field of discriminant $\Delta$ and degree $n$. 

\begin{theorem}\label{newbasthm}
Let 
\begin{equation}\label{eqfpolt}
f(x) = a_1 x^n + a_2 x^{n-1} + \dots a_{n} x + a_{n+1} 
\end{equation}
be an irreducible polynomial of degree $n$ in $\Z [x]$ such that $\left[ a_0 , \ \mathcal{B}(x, y) \right]$ is an essential pair, where $f(x) = \mathcal{B}(x, 1)$, the discriminant of $\mathcal{B}$ is equal to $\Delta a_0^2$, $\Delta $ is the discriminant of a number field $\EE = \Q (\zeta )$ of degree $n$ over $\Q $, where $f(\zeta ) = 0$. Then an integral basis for the ring of integers of $\EE$ is given by $\left\{ \omega_0 , \omega_1 , \omega_2 , \dots , \omega_{n-1} \right\}$, where 
\begin{align}\label{ibsn}
 \omega_0 & = 1, & \omega_1 & = \frac{a_1}{a_0} \zeta , & \omega_j & = \sum_{k = 1}^{j} a_k \zeta^{j+1-k} \ (j > 1) .
\end{align}
\end{theorem}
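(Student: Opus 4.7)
The plan is to follow the scheme of Theorem \ref{inbasthm}, with the only substantive change occurring at the renormalized element $\omega_1 = (a_1/a_0)\zeta$. First I would let $\Gamma_\EE' = \bigl[\kappa_{i-1}(\omega_{j-1})\bigr]$ and compare it with the matrix $\Gamma_\EE$ built from the $\rho$-basis of Theorem \ref{inbasthm}. Since $\omega_j = \rho_j$ for $j \neq 1$ and $\omega_1 = \rho_1/a_0$, one has $\Gamma_\EE' = \Gamma_\EE D$ with $D = \mathrm{diag}(1, 1/a_0, 1, \dots, 1)$, and $D$ is invertible because $a_0 \neq 0$ (forced by $a_0^2 \mid a_1$ and $a_1 \neq 0$). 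Invertibility of $\Gamma_\EE'$ then forces the $\omega_j$ to be $\Q$-linearly independent, exactly as in the earlier proof. For $j \geq 2$, $\omega_j = \rho_j$ is a root of the monic integer polynomial $\det\bigl(N_\EE^{(x - \rho_j)}\bigr)$ provided by Lemma \ref{samematr}, so $\omega_j \in \mathcal{O}_\EE$; this step uses only the irreducibility of $f$ and not any matching between $\mathrm{disc}(f)$ and $\Delta$.

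The essential new step is to show that $\omega_1 = (a_1/a_0)\zeta$ is itself an algebraic integer. I would substitute $\zeta = (a_0/a_1)\omega_1$ into $f(\zeta) = 0$ and clear denominators by multiplying through by $a_1^{n-1}/a_0^n$, producing the monic equation
\begin{equation*}
\omega_1^n + \sum_{i=2}^{n+1} \frac{a_i\, a_1^{i-2}}{a_0^{i-1}}\, \omega_1^{n+1-i} = 0.
\end{equation*}
Under the essential-pair divisibility hypotheses $a_0 \mid a_2$ and $a_0^2 \mid a_1$, every coefficient will be seen to be a rational integer: the case $i=2$ is immediate from $a_0 \mid a_2$, and for $i \geq 3$ the bound $a_0^{2(i-2)} \mid a_1^{i-2}$ combined with the inequality $2(i-2) \geq i-1$ yields $a_0^{i-1} \mid a_1^{i-2}$. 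Hence $\omega_1 \in \mathcal{O}_\EE$.

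Finally, I would compute the discriminant of the order $\mathcal{O} = \Z\omega_0 + \Z\omega_1 + \dots + \Z\omega_{n-1}$. Using $\Gamma_\EE' = \Gamma_\EE D$ together with the identity $\det(\Gamma_\EE)^2 = \mathrm{disc}(f) = \Delta a_0^2$ inherited from the decomposition $\Gamma_\EE = \Xi A$ of \eqref{gammadecomp2}, one has
\begin{equation*}
\det(\Gamma_\EE')^2 \;=\; \frac{\det(\Gamma_\EE)^2}{a_0^2} \;=\; \frac{\Delta a_0^2}{a_0^2} \;=\; \Delta .
\end{equation*}
Since $\mathcal{O} \subseteq \mathcal{O}_\EE$ and both orders share the discriminant $\Delta$ of the field, $\mathcal{O}$ must be maximal, so $\mathcal{O} = \mathcal{O}_\EE$ and $\{\omega_0, \dots, \omega_{n-1}\}$ is an integral basis. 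The main obstacle is the middle step: this is where the numerical essential-pair hypotheses enter, and the conditions $a_0^2 \mid a_1$ and $a_0 \mid a_2$ turn out to be exactly the sharp conditions forcing the minimal polynomial of $(a_1/a_0)\zeta$ to lie in $\Z[x]$.
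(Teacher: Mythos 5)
Your proposal is correct and follows essentially the same route as the paper: the factorization $\Gamma_{\EE}' = \Gamma_{\EE} D$ with $D = \mathrm{diag}(1, 1/a_0, 1, \dots, 1)$ is exactly the paper's decomposition $\Gamma_{\EE} = \Xi \overline{A} = \Xi A Z$, the discriminant computation $\det(\Gamma_{\EE}')^2 = \Delta a_0^2 / a_0^2 = \Delta$ matches \eqref{discnew}, and the integrality of $\omega_1$ is established via the same monic polynomial obtained by multiplying $f$ through by $a_1^{n-1}/a_0^{n}$. Your explicit divisibility check ($a_0 \mid a_2$ for $i=2$ and $a_0^{i-1} \mid a_0^{2(i-2)} \mid a_1^{i-2}$ for $i \geq 3$) is a welcome elaboration of a step the paper merely asserts.
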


\begin{proof}
We let $\zeta_i $ be the roots of $f(x)$, where $\zeta_0 = \zeta $, 
\begin{align}\label{wolk}
\Xi & = \left(
\begin{array}{ccccc}
 1 & \zeta &   \dots & \zeta^{n-1} \\
 1 & \zeta_{1}  & \dots & \zeta_{1}^{n-1} \\
 1 & \zeta_{2}  & \dots & \zeta_{2}^{n-1} \\
 1 & \zeta_{3}  & \dots & \zeta_{3}^{n-1} \\
 \vdots & \vdots  & \ddots & \vdots \\
 1 & \zeta_{n-1} & \dots &  \zeta_{n-1}^{n-1} \\
\end{array}
\right) , & \overline{A} & = \left(
\begin{array}{cccccc}
 1 & 0 & 0 & 0 & \dots & 0 \\
 0 & a_1 & a_2 & a_3 & \dots & a_{n-1} \\
 0 & 0 & a_1 & a_2 & \dots & a_{n-2} \\
 0 & 0 & 0 & a_1 & \dots & a_{n-3} \\
 \vdots & \vdots & \vdots & \vdots & \ddots & \vdots \\
 0 & 0 & 0 & 0 & \dots & a_1 \\
\end{array}
\right) Z ,
\end{align}
where $Z$ is the $n \times n$ diagonal matrix with diagonal entries equal to $1$, except possibly when $i = j = 2$, where the entry in the second row and column is $\frac{1}{a_0}$. Let $\Gamma_{\EE } = \left[ \kappa_{i-1} \left( \omega_{j-1} \right) \right]$, where $\kappa_{i-1}$ for $i = 1$ to $n$ are the embeddings of $\EE $ in $\C$, then we have $\Gamma_{\EE } = \Xi \overline{A} $. Since $a_0^n \mid a_1^{n-1}$, we have $a_0 \mid a_1$ so the entries of $\overline{A}$ are rational integers. Taking the square of the determinants of these matrices, 
\begin{equation}\label{discnew}
 \det \left( \Gamma_{\EE} \right)^2 = \frac{a_{1}^{2 n-2}}{a_0^2} \prod_{i < j} \left( \zeta_{i} - \zeta_{j} \right)^2 = \Delta . 
\end{equation}
The rest of the proof is almost the same as that of Theorem \ref{inbasthm}. However, we must show that $\omega_1 \ \left( = \frac{a_1}{a_0} \zeta \right)$ is an algebraic integer. Multiplying \eqref{eqfpolt} by the rational integer $\frac{a_1^{n-1}}{a_0^n}$, we see that $\omega_1$ is a root of the monic polynomial 
\begin{equation*}
X^n + \frac{a_2}{a_0} X^{n-1} + \frac{a_1 a_3}{a_0^2} X^{n-2} + \frac{a_1^2 a_4}{a_0^3} X^{n-3} + \dots + \frac{a_1^{n-2} a_{n}}{a_0^{n-1}} X + \frac{a_1^{n-1} a_{n+1}}{a_0^{n}} ,
\end{equation*}
with coefficients in $\Z $. 

Next, let $Z$ be the diagonal matrix on the right of \eqref{wolk}. The arithmetic matrix $\overline{N_{\EE}^{(\alpha )}}$ for the basis given in this theorem is obtained by replacing $x_1$ with $\frac{x_1}{a_0}$ in $Z^{-1} M_{\EE}^{(\alpha )} Z$, where $M_{\EE}^{(\alpha )}$ is given by \eqref{defaijone} to \eqref{defaijfive}. We obtain this expression since $$\alpha = x_0 + x_1 \omega_1 + \dots + x_{n-1} \omega_{n-1} = x_0 + z_1 \rho_1 + x_2 \rho_2 + \dots + x_{n-1} \rho_{n-1} ,$$ where $z_1 = \frac{x_1}{a_0}$ and $\rho_j = \omega_j$ for $j \not= 1$, $\rho_1 = a_1 \zeta $.

Let $T_{\EE }$ be the matrix obtained by replacing $x_1$ with $\frac{x_1}{a_0}$ in \eqref{defaijone} to \eqref{defaijfive}. Assuming $n > 2$, the entries in the first three rows of 
\begin{equation}\label{newarithm}
\overline{N_{\EE}^{(\alpha )}} = Z^{-1} T_{\EE } Z
\end{equation}
for $2 \leq j \leq n$ are given by
\begin{align*}
& (j = 2)  & & (j = 3, \dots, n-1) \\
 a_{1 \ 2} & = - \frac{a_1 a_{n+1}}{a_0} x_{n-1} , & a_{1 \ j} & = - a_{n+1} \sum_{k = 1}^{j-1} a_k x_{k+n-j} ,  \\
 a_{2 \ 2} & = x_0 - \frac{a_2}{a_0} x_1 - \sum_{k = 3}^{n+1} a_k x_{k-1} , & a_{2 \ j} & = - a_j x_1 - a_0 \sum_{k = j+1}^{n+1} a_k x_{k+1-j} , \\
 a_{3 \ 2} & = \frac{a_1}{a_0^2} x_1 , & a_{3 \ j} & = \delta_{3 \ j} \ x_0 - \sum_{k = j}^{m_{3 \ j}} a_k x_{k+2-j} , \\
\end{align*}
and
\begin{align*}
& (j = n) & \\
a_{1 \ n} & = - \frac{a_1 a_{n+1}}{a_0} x_1 - a_{n+1} \sum_{k = 2}^{n-1} a_k x_{k} , \\
a_{2 \ n} & = - a_n x_1 - a_0 a_{n+1} x_2 , \\
a_{3 \ n} & = \delta_{3 \ n} \ x_0 - \sum_{k = n}^{m_{3 \ n}} a_k x_{k+2-n} .
\end{align*}
The remaining entries of $\overline{N_{\EE}^{(\alpha )}} = Z^{-1} T_{\EE } Z$ have the following expressions:
\begin{align*}
 a_{i \ 1} & = x_{i-1} , \ \text{for} \ i \geq 1 , & a_{i \ j} & = \sum_{k = 1}^{j-1} a_k x_{k+i-j-1} \ \text{for} \ i - 2 \geq j \geq 3 , \\
 a_{i \ 2} & = \frac{a_1}{a_0} x_{i-2} \ \text{for} \ i \geq 4 , & a_{i \ i-1} & = \frac{a_1}{a_0} x_1 + \sum_{k=2}^{i-2} a_k x_k , \ \text{for} \ i \geq 4 , \\
 m_{i \ j} & = \min (n - i + j, n + 1) ,  & a_{i \ j} & = \delta_{ij} x_0 - \sum_{k = j}^{m_{ij}} a_k x_{k+i-j-1} \ \text{for} \ j \geq i \geq 3 .
\end{align*} 
Since $a_0^2 \mid a_1$, the entries of the matrix $\overline{N_{\EE}^{(\alpha )}}$ belong to $\Z \left[ x_0, x_1, \dots, x_{n-1} \right]$. All of the $\omega_j$ are algebraic integers just as in the proof of Theorem \ref{inbasthm}. 
\end{proof}

From this point forward, we will use $N_{\EE}^{(\alpha )}$ to denote an arithmetic matrix determined by an integral basis for $\mathcal{O}_{\EE }$. Most often the entries of $N_{\EE}^{(\alpha )}$ will be given by the equations that immediately follow \eqref{newarithm}. We are now in a position to state the main result of this article. The proof is almost identical to that of Proposition \ref{mainres}. 

\begin{proposition}\label{genclaim}
In terms of the integral basis of the ring of integers given Theorem \ref{newbasthm}, the analogue of Proposition \ref{mainres} holds for the arithmetic matrix given by \eqref{newarithm}. 
\end{proposition}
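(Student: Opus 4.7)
The plan is to transfer each item of Proposition \ref{mainres} from $M_{\EE}^{(\alpha )}$ to $\overline{N_{\EE}^{(\alpha )}}$ via the single conjugation relation \eqref{newarithm}, exploiting the fact that the modified basis of Theorem \ref{newbasthm} differs from the basis of Theorem \ref{inbasthm} in only one coordinate. First I would observe that if $\alpha = \sum_{j=0}^{n-1} x_j \omega_j$ in the new basis, then in the old basis $\alpha = x_0 + z_1 \rho_1 + \sum_{j=2}^{n-1} x_j \rho_j$ with $z_1 = x_1/a_0$, since $\omega_1 = \rho_1/a_0$ and $\omega_j = \rho_j$ for $j \neq 1$. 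The corresponding matrices of conjugates satisfy $\Gamma_{\EE}^{\text{new}} = \Gamma_{\EE}^{\text{old}} Z$, with $Z$ the diagonal matrix of \eqref{wolk}. Writing $T_{\EE}^{(\alpha )}$ for the matrix $M_{\EE}^{(\alpha )}$ of Lemma \ref{samematr} with $x_1$ replaced by $x_1/a_0$, I obtain
\begin{equation*}
(\Gamma_{\EE}^{\text{new}})^{-1} \Theta_{\EE}^{(\alpha )} \Gamma_{\EE}^{\text{new}} = Z^{-1} (\Gamma_{\EE}^{\text{old}})^{-1} \Theta_{\EE}^{(\alpha )} \Gamma_{\EE}^{\text{old}} Z = Z^{-1} T_{\EE}^{(\alpha )} Z = \overline{N_{\EE}^{(\alpha )}} ,
\end{equation*}
which establishes the analogue of item (1) and simultaneously exhibits $\overline{N_{\EE}^{(\alpha )}}$ as a similarity of a matrix that already satisfies all seven items of Proposition \ref{mainres}.

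Items (2)--(6) then follow formally from this conjugation. Additivity is inherited from linearity of $\alpha \mapsto M_{\EE}^{(\alpha )}$ together with linearity of both the substitution $x_1 \mapsto x_1/a_0$ and conjugation by $Z$. For multiplicativity,
\begin{equation*}
\overline{N_{\EE}^{(\alpha )}} \, \overline{N_{\EE}^{(\beta )}} = Z^{-1} T_{\EE}^{(\alpha )} Z Z^{-1} T_{\EE}^{(\beta )} Z = Z^{-1} T_{\EE}^{(\alpha\beta )} Z = \overline{N_{\EE}^{(\alpha\beta )}} ,
\end{equation*}
with commutativity descending from Proposition \ref{mainres}(3). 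Trace and determinant are similarity invariants, so items (4) and (5) reduce immediately to Proposition \ref{mainres}(4) and (5), and item (6) is
\begin{equation*}
\overline{N_{\EE}^{(1/\alpha )}} = Z^{-1} \left( T_{\EE}^{(\alpha )} \right)^{-1} Z = \left( Z^{-1} T_{\EE}^{(\alpha )} Z \right)^{-1} = \left( \overline{N_{\EE}^{(\alpha )}} \right)^{-1}.
\end{equation*}

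The last item, (7), is where I expect the main obstacle to lie, because here one must genuinely use the essential pair hypothesis rather than a purely formal conjugation argument. In the forward direction, when $\alpha \in \mathcal{O}_{\EE}$ the coefficients $x_j$ are rational integers, and the explicit entry-by-entry formulas for $\overline{N_{\EE}^{(\alpha )}}$ listed at the end of the proof of Theorem \ref{newbasthm} already exhibit each $a_{ij}$ as an element of $\Z [x_0, x_1, \dots, x_{n-1}]$; this is precisely where the divisibilities $a_0^2 \mid a_1$ and $a_0 \mid a_2$ built into the definition of an essential pair are consumed, cancelling the $1/a_0$ introduced by the substitution against the conjugation by $Z$ (so that, for instance, the $(1,n)$ entry $-\frac{a_1 a_{n+1}}{a_0} x_1 - a_{n+1}\sum_{k=2}^{n-1} a_k x_k$ has integer coefficients). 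For the converse, the argument of Corollary \ref{cortothm} carries through verbatim with $\Gamma_{\EE}^{\text{new}}$ in place of $\Gamma_{\EE}$: if every entry of $\overline{N_{\EE}^{(\alpha )}}$ lies in $\Z$, then the rows of $\Gamma_{\EE}^{\text{new}} \overline{N_{\EE}^{(\alpha )}} = \Theta_{\EE}^{(\alpha )} \Gamma_{\EE}^{\text{new}}$ lie in $\mathcal{O}_{\EE}$ (since the $\omega_j$ are algebraic integers by Theorem \ref{newbasthm}), and reading off the $(1,1)$ entry yields $\alpha \in \mathcal{O}_{\EE}$.
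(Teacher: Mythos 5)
Your proposal is correct and follows essentially the same route as the paper: the paper's proof of Theorem \ref{newbasthm} already establishes the conjugation relation $\overline{N_{\EE}^{(\alpha )}} = Z^{-1} T_{\EE } Z$ and uses the divisibility conditions $a_0^2 \mid a_1$, $a_0 \mid a_2$ to secure integrality of the entries, after which the paper simply declares the remaining verification ``almost identical to that of Proposition \ref{mainres}.'' You have merely written out explicitly the formal transfer of items (1)--(6) by similarity and the adaptation of Corollary \ref{cortothm} for item (7), which is exactly what the paper leaves implicit.
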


\begin{example}
Let $\EE = \Q (\zeta )$ be a quartic field of discriminant $\Delta = 513$, where $\zeta $ is a root of $\mathcal{B}(x, 1)$, $\mathcal{B} = (a,b,c,d,e) = (4, - 2, - 3, 1, 1)$. The discriminant of $\mathcal{B}$ is $4 \Delta$. It appears that there is may not exist an essential binary quartic form for this field. Nevertheless, there is an integral basis for the ring of integers $\mathcal{O}_{\EE}$ given by $\left\{ 1, \omega_1, \omega_2, \omega_3 \right\}$, where $\omega_1 = \frac{1}{2} a \zeta $, $\omega_2 = a \zeta^2 + b \zeta $, $\omega_3 = a \zeta^3 + b \zeta^2 + c \zeta $. An arithmetic matrix can still be determined, and easily found using a precise numerical approximation of $\Gamma_{\EE}$. However, the resulting formulas for the entries of this arithmetic matrix will not be the same as those given in \eqref{defaijone} to \eqref{defaijfive}, so that the following matrix is not correct for this number field because it does not facilitate arithmetic in the maximal order $\mathcal{O}_{\EE}$: 
\begin{equation*}
 \left(
\begin{array}{cccc}
 u & -4 z & 2 z-4 y & -4 x+2 y+3 z \\
 x & u+2 x+3 y-z & 3 x-y-z & -x-y \\
 y & 4 x & u+3 y-z & -y-z \\
 z & 4 y & 4 x-2 y & u-z \\
\end{array}
\right) .
\end{equation*}
We must make some minor adjustments as indicated in the proof of Theorem \ref{newbasthm}. The correct arithmetic matrix for this field is given by 
\begin{equation*}
N_{\EE }^{(\alpha )} = \left(
\begin{array}{cccc}
 u & -2 z & 2 z-4 y & -2 x+2 y+3 z \\
 x & u+x+3 y-z & 3 x-2 y-2 z & -x-2 y \\
 y & x & u+3 y-z & -y-z \\
 z & 2 y & 2 x-2 y & u-z \\
\end{array}
\right) .
\end{equation*}
\end{example}

\begin{remark}
Let $\EE$ be a number field of degree $n$ over $\Q$ and discriminant $\Delta $. If there exists an algebraic integer $\alpha \in \mathcal{O}_{\EE }$ of degree $n$ such that the discriminant $D (\alpha )$ of $\alpha $ divides both $\Delta N(\alpha )$ and $\Delta N(\alpha )^2 Tr \left( \alpha ^{-1} \right)^2$, then $\left[ a_0 , \mathcal{B} \right]$ is an essential pair for $\EE$, where $a_0 = \sqrt{\frac{D(\alpha )}{\Delta }}$, $\mathcal{B}(x, y) = x^n f (y/x)$, and $f(x)$ is the minimum polynomial of $\alpha $. 
\end{remark}

The existence of an essential pair for a number field of discriminant $\Delta$ and degree $n \geq 4$ seems likely. It is sensible to investigate whether for each quintic field there is a quintic essential form. We list essential pairs for some quintic fields in Table \ref{quinticindf}. It is easy to see that if $\mathcal{B}$ is an essential form for $\EE$, then $[1, \ \mathcal{B} ]$ is an essential pair for $\EE$.

\section{Matrix multiplication}\label{matmult}

In the introduction we mentioned that these results may enable efficient implementation of products of algebraic integers. Before we discuss this, it important to clarify that we are not advocating the use of an efficient matrix multiplication algorithm in order to take the product of only two algebraic integers. Instead, when we are interested in computing several products of algebraic integers, Algorithm \ref{algww} below may provide a reasonable alternative. When we wish to take a single product $\alpha \beta $, where $\alpha , \beta \in \mathcal{O}_{\EE }$ and $\EE$ has degree $n$ over $\Q$, this can be done with the product of a square arithmetic matrix and a column vector. In the most naive manner, this requires $n^2$ multiplications, although there may be ways to reduce the number of multiplications required. 

An algorithm due to Winograd \cite{Winograd2} and Waksman \cite{Waksman2} for computing $A B$, where $A$ is an $m \times m$ matrix with $m$ even and $B$ is an $m \times m$ matrix, is given below. 
\begin{algorithm}[Winograd-Waksman]\label{algww}   
{\bf Input:} Two $m \times m$ matrices $A = \left[ a_{i,j} \right]$ and $B = \left[ b_{i,j} \right]$ with $m$ even. {\bf Output:} The product $A B = \left[ c_{i,j} \right]$. 
\begin{enumerate}
\item For each $i, j$: $1 \leq i, j \leq n$, calculate 
\begin{equation*}
X_{i,j} = \sum_{k=1}^{m/2} \left( a_{i, 2 k - 1} + b_{2 k, j} \right) \left( a_{i, 2 k} + b_{2 k - 1, j} \right) .
\end{equation*}
($\frac{n^3}{2}$ multiplications)
\item For each $i$: $1 \leq i \leq m$ and $j = 1, i$, calculate 
\begin{equation*}
Z_{i,j} = \sum_{k=1}^{m/2} \left( a_{i, 2 k - 1} - b_{2 k, j} \right) \left( a_{i, 2 k} - b_{2 k - 1, j} \right) .
\end{equation*}
($\frac{n}{2} \left( m + m - 1 \right) = n^2 - \frac{m}{2}$ multiplications since we need only compute $Z_{1,1}$ once)
\item For each $i, j$: $1 \leq i \leq n$ and $j = 1, i$, calculate the sums 
\begin{equation*}
Y_{i,j} = \frac{1}{2} \left( X_{i, j} + Z_{i, j} \right) .
\end{equation*}
\item For each $i$: $2 \leq i \leq n$, calculate the sums 
\begin{equation*}
Y_{i, 1} = Y_{1,1} + Y_{i,i} - Y_{1,i} . 
\end{equation*}
\item For each $i, j$: $2 \leq i, j \leq m$, calculate the sums 
\begin{equation*}
Y_{i,j} = Y_{1,1} + Y_{i,i} + Y_{j, j} - Y_{j, i} - Y_{1, i} .
\end{equation*}
\item For each $i, j$: $1 \leq i, j \leq n$, calculate $c_{i, j} = X_{i, j} - Y_{i, j}$.
\end{enumerate}
\end{algorithm}
The total number of multiplications required to compute $A B$ is $\frac{m^3}{2} + m^2 - \frac{m}{2}$. Doing so with a recursive version of the algorithm means that $A B$ can be computed in $O \left( m^{\log_2(7)} \right)$ multiplications. If we are multiplying $\alpha \in \mathcal{O}_{\EE}$ by each of the $n$ elements $\beta_1, \beta_2, \dots , \beta_n \in \mathcal{O}_{\EE}$, then the Winograd-Waksman algorithm says that we can do this in $O \left( n^{\log_2(7)} \right)$ multiplications; via the product $N_{\EE}^{(\alpha )} U$, where the columns of the $n \times n$ matrix $U$ are respectively the coefficients of the integral basis elements $\rho_i$ in the expressions for $\beta_j$. To implement the algorithm it is necessary to pad the matrices with rows and columns of zeros so that they always have $n$ even. However, it is not necessary for $n$ to be a power of $2$; padding is done at every iteration. 

Alternatively, if we were to use the Radix-2 FFT to compute the product of two algebraic integers $\alpha , \beta \in \mathcal{O}_{\EE }$, where $\EE = \Q (\zeta )$ has degree $n$ over $\Q$, we would need to express $\alpha $ and $\beta $ in terms of powers of $\zeta $ using $\Gamma_{\EE } = \Xi \overline{A}$ and $\Xi \overline{A} \left( x_0, x_1, \dots , x_{n-1} \right)^T = \left( \alpha, \alpha^{(1)}, \dots , \alpha^{(n-1)} \right)^T$, where $\Xi $ and $\overline{A}$ are given by \eqref{wolk}. This means we must multiply two polynomials of $p(x)$ and $q(x)$ of maximum degree $n$ having coefficients in $\Q$. However, we can express $p(x)$ with coefficients that have a common denominator, so we may assume that the task is to multiply two polynomials $f(x), g(x) \in \Z [x]$ of maximum degree $n$.

\begin{algorithm}[FFT polynomial multiplication]\label{algR2FFT}   
{\bf Input:} Two polynomials $f(x)$ and $g(x)$ of maximum degree $n$. {\bf Output:} The product $f(x) g(x)$. 
\begin{enumerate}
\item Construct arrays 
\begin{align*}
F & = \left\{ f_0, f_1, \dots , f_{n}, 0, 0, \dots , 0 \right\}, & G & = \left\{ g_0, g_1, \dots , g_{n}, 0, 0, \dots , 0 \right\} 
\end{align*}
from the coefficients of the polynomials 
\begin{align*}
f(x) & = f_0 + f_1 x + \dots + f_n x^n, & g(x) & = g_0 + g_1 x + \dots + g_n x^n , 
\end{align*}
padding the ends with zeros until $F$ and $G$ have length $2 n$. 
\item Calculate the FFT of both $F$ and $G$, denoting these by $\overline{F}$ and $\overline{G}$.
\item Take the Hadamard product of $\overline{F}$ and $\overline{G}$, and denote this by $\overline{H}$. 
\item Compute the inverse FFT of $\overline{H}$, and assume this is $H = \left\{ h_0, h_1, \dots , h_{2 n} \right\}$. 
\item Return the product $f(x) g(x) = h_0 + h_1 x + \dots + h_{2 n} x^{2 n}$. 
\end{enumerate}
\end{algorithm}

Once we have computed the product of $\alpha , \beta \in \mathcal{O}_{\EE }$ using Algorithm \ref{algR2FFT}, we must next use the minimum polynomial of $\zeta $ to express the result as a polynomial in $\zeta $ with maximum degree $n$, and finally use $\Gamma_{\EE } = \Xi \overline{A}$ to return an expression in terms of the original integral basis for $\mathcal{O}_{\EE }$. The number of multiplications required to perform Algorithm \ref{algR2FFT} using the Radix-2 FFT is $O \left( n \log_2 (n) \right)$ multiplications. Of course if $n$ is large enough, this is an improvement on $O \left( n^2 \right)$ multiplications. However, there are two matrix-vector multiplications to perform in order to convert between the integral basis for $\mathcal{O}_{\EE }$ and the basis of $\EE $ so the number of multiplications required to use the FFT in multiplication of two algebraic integers is $O \left( n^2 \right)$. This analysis ignores the constants involved in counting operations, the coefficient of $n^2$. Furthermore, we must compute the FFT three times, and it is well known that the polynomials involved must be quite large in order for use of the FFT to be the optimal choice due to these constants and other practical considerations on implementing the algorithm.   

To multiply $\alpha \in \mathcal{O}_{\EE }$ by each of $\beta_1, \beta_2, \dots , \beta_n \in \mathcal{O}_{\EE }$, we let 
\begin{align*}
\alpha & = x_0 + x_1 \rho_1 + \dots + x_{n-1} \rho_{n-1} , & \beta_k & = y_0^{(k)} + y_1^{(k)} \rho_1 + \dots + y_{n-1}^{(k)} \rho_{n-1} , 
\end{align*}
where $\left\{ \rho_j \right\}$ is the integral basis for $\mathcal{O}_{\EE }$ given by Theorem \ref{newbasthm}, and compute the matrix product $\overline{N_{\EE }^{(\alpha )}} U $, where 
\begin{equation*}
U = \left(
\begin{array}{cccc}
 y_0^{(0)} & y_0^{(1)} & \dots & y_0^{(n)} \\
 y_1^{(0)} & y_1^{(1)} & \dots & y_1^{(n)} \\
 \vdots & \vdots & \ddots & \vdots \\
 y_{n-1}^{(0)} & y_{n-1}^{(1)} & \dots & y_{n-1}^{(n)} \\
\end{array}
\right) .
\end{equation*}
The matrix $N_{\EE }^{(\alpha )} U $ contains the coefficients of the $\rho_j$ in each of $\alpha \beta_k$. Using Algorithm \ref{algww} to do this would require $O \left( n^{\log_2(7)} \right)$ multiplications. It would take $O \left( n^{2 n} \right)$ multiplications to do the same with the FFT. 

\small
\begin{center}
\begin{longtable}{|c|c|c|c|c|}
\caption{Essential pairs $\left[ a_0, \left( a_1, a_2, a_3, a_4, a_5 \right) \right]$ for quartic fields of discriminant $\Delta $} \label{quarticindf} \\

\hline \multicolumn{1}{|c|}{$\Delta$} & \multicolumn{1}{c|}{$\mathcal{B}(x,y)$} & \hspace{2.0 cm} & \multicolumn{1}{|c|}{$\Delta$} & \multicolumn{1}{c|}{$\mathcal{B}(x,y)$}  \\ \hline
\endfirsthead

\multicolumn{5}{c}{\tablename\ \thetable{} -- continued} \\
\hline \multicolumn{1}{|c|}{$\Delta$} & \multicolumn{1}{c|}{$\mathcal{B}(x,y)$} & \hspace{2.0 cm} & \multicolumn{1}{|c|}{$\Delta$} & \multicolumn{1}{c|}{$\mathcal{B}(x,y)$} \\ \hline
\endhead

\hline \multicolumn{5}{|c|}{{Continued}} \\ \hline
\endfoot

\hline 
\endlastfoot

-275 & [1, \ (1, 1, 0, -2, -1)]  & & 117 & [1, \ (1, -1, -1, 1, 1)]  \\   
-283 & [1, \ (1, 1, 0, 0, -1)]  & & 125 & [1, \ (1, 1, 1, 1, 1)] \\   
-331 & [1, \ (1, -1, -1, 1, -1)]  & & 144 & [1, \ (1, 0, -1, 0, 1)]  \\
-400 & [1, \ (1, 0, 1, 0, -1)]  & & 189 & [1, \ (1, 1, 0, -2, 1)]  \\
-448 & [1, \ (1, 2, 1, -2, -1)]  & & 225 & [2, \ (4, -6, 5, -3, 1)]    \\
-475 & [1, \ (1, 1, -2, 2, -1)]  & & 229 & [1, \ (1, 0, 0, 1, 1)]  \\
-491 & [1, \ (1, 2, 2, -1, -1)]  & & 256 & [1, \ (1, 0, 0, 0, 1)]  \\
-507 & [1, \ (1, 1, -1, 1, 1)]  & & 257 & [1, \ (1, 0, 1, -1, 1)]  \\
-563 & [1, \ (1, 1, 1, 1, -1)]  & & 272 & [1, \ (1, 0, 1, 2, 1)]  \\
-643 & [1, \ (1, 1, 0, 2, 1)]  & & 320 & [1, \ (1, 2, 0, 0, 2)]  \\
-688 & [1, \ (1, 0, 0, -2, -1)]  & & 392 & [1, \ (1, 1, 0, -1, 1)]  \\
-731 & [1, \ (1, 0, -2, -1, -1)]  & & 400 & [1, \ (1, 0, 3, 0, 1)] \\
-751 & [1, \ (1, 2, 1, 1, -1)]  & & 432 & [1, \ (1, 0, 3, 0, 3)]  \\
-775 & [2, \ (4, 2, -3, -3, -1)]   & & 441 & [2, \ (4, 2, -1, 1, 1)]  \\
-848 & [1, \ (1, 0, -1, -2, 1)]  & & 512 & [1, \ (1, 0, 2, 0, 2)]  \\
-976 & [1, \ (1, -2, 3, 0, -1)]  & & 513 & [2, \ (4, - 2, - 3, 1, 1)]   \\
-1024 & [1, \ (1, 0, 2, 0, -1)]  & & 549 & [1, \ (1, -2, -2, 3, 3)]  \\
-1099 & [1, \ (1, 1, 1, 3, 1)]  & & 576 & [2, \ (4, 4, 2, 2, 1)]   \\
-1107 & [1, \ (1, 1, 0, 2, -1)]  & & 592 & [1, \ (1, -2, 4, -2, 2)]  \\
-1156 & [1, \ (1, 1, -2, 1, 1)]  & & 605 & [1, \ (1, 1, 1, -1, 1)]  \\
-1192 & [1, \ (1, -1, -2, 1, -1)]  & & 656 & [1, \ (1, 2, -1, -2, 2)]  \\
-1255 & (1, \ (1 , 0, -1, 3, -1)]  & & 657 & [2, \ (4, 2, 5, 1, 1)]   \\
-1323 & [1, \ (1, -1, -3, -1, 1)]  & & 697 & [1, \ (1, 1, 2, 1, 2)]  \\
-1328 & [1, \ (1, -2, -3, 0, 1)]  & & 725 & [1, \ (1, -1, -3, 1, 1)]  \\
-1371 & [1, \ (1, 0, 2, 1, -1)]  & & 761 & [1, \ (1, -2, 1, 1, 1)]  \\
-1375 & \ [4, \ (16, -20 , -5, 15, -5)] \ & & 784 & [2, \ (4, 0, -3, 0, 1)]   \\
-1399 & [1, \ (1, -1, 0, 1, -2)]  & & 788 & [1, \ (1, -1, 2, -2, 2)]  \\
-1423 & [1, \ (1, -1, 1, -2, -1)]  & & 832 & [1, \ (1, 2, 0, -4, 2)]  \\
-1424 & [1, \ (1, 0, 1, 2, -1)]  & & 837 & [1, \ (1, 3, 0, -6, 3)]  \\
-1456 & [1, \ (1, 0, -2, 2, 1)]  & & 873 & [2, \ (4, 2, 5, -5, 1)]   \\
-1472 & [1, \ (1, 2, 2, 0, -2)]  & & 892 & [1, \ (1, 1, 2, -3, 1)]  \\
-1472 & [1, \ (1, 2, 3, 2, -1)] & & 981 & [1, \ (1, -5, 5, 5, 1)]  \\
-1475 & [3, \ (9, 0, -8, -5, -1)] & & 985 & [1, \ (1, 1, 3, 2, 3)]  \\
-1588 & [1, \ (1, 1, -3, 0, 2)]  & & 1008 & \ [1, \ (1, 4, 1, -6, 3)] \ \\
-1600 & [2, \ (4, 4, -2, -4, -1)] & & 1008 & [1, \ (1, -4, 11, -14, 13)]  \\
-1728 & [1, \ (1, 2, 0, -4, -2)] & & 1016 & [1, \ (1, -1, 1, -2, 2)]  \\
-1732 & [1, \ (1, 3, 0, -1, -1)] & & 1025 & [2, \ (4, 2, 3, 1, 1)]   \\
-1775 & [4, \ (16, 4, -15, 7, -1)] & & 1040 & [1, \ (1, 4, 3, -4, 1)]  \\
-1791 & [1, \ (1, 5, 8, 7, 4)]  & & 1040 & \ [4, \ (16, 32 , 25 , 8 , 1)] \ \\
-1792 & [1, \ (1, 4, 4, 4, 1)]  & & 1076 & [1, \ (1, 3, 6, 4, 2)] \\
-1823 & [1, \ (1, 3, 3, 4, 1)]  & & 1088 & [1, \ (1, 2, 1, -2, 1)]  \\
-1856 & [1, \ (1, 2, 1, 2, -1)]  & & 1088 & [1, \ (1, 2, 5, 4, 2)]  \\
-1879 & [2, \ (4, -2, -5, 5, -1)]   & & 1125 & [1, \ (1, 7, 14, 8, 1)]  \\
-1927 & [1, \ (1, 7, 13, 2, -1)]  & & 1129 & [1, \ (1, 1, 0, 1, 2)]  \\
-1931 & [1, \ (1, 0, 0, 3, 1)]  & & 1161 & [4, \ (16, 20, 15, 5, 1)]   \\
-1963 & [1, \ (1, 1, 2, 2, -1)]  & & 1168 & [3, \ (9, -6, 5, -4, 1)]   \\
-1968 & [1, \ (1, 8, 20, 14, -2)] & & 1197 & [1, (1, -3, 8, 6, 1)]  \\
-1975 & \ [10, \ (100, 90, 19, -3, -1)] \ & & 1197 & [3, (9, -3, -5, 1, 1)]  \\
-1984 & [1, \ (1, -2, 1, 0, -2)]  & & 1225 & \ [6, (36, 42, 23, 7, 1)] \ \\
-1984 & [1, \ (1, -2, 2, 2, -1)]  & & 1229 & [1, (1, 1, 3, 1, 3)]  
\end{longtable}
\end{center}
\normalsize 

\small
\begin{center}
\begin{longtable}{|c|c|c|c|c|}
\caption{Essential pairs $\left[ a_0, \left( a_1, a_2, a_3, a_4, a_5 \right) \right]$ for quintic fields of discriminant $\Delta $} \label{quinticindf} \\

\hline \multicolumn{1}{|c|}{$\Delta$} & \multicolumn{1}{c|}{$\mathcal{B}(x,y)$} & \hspace{2.0 cm} & \multicolumn{1}{|c|}{$\Delta$} & \multicolumn{1}{c|}{$\mathcal{B}(x,y)$}  \\ \hline
\endfirsthead

\multicolumn{5}{c}{\tablename\ \thetable{} -- continued} \\
\hline \multicolumn{1}{|c|}{$\Delta$} & \multicolumn{1}{c|}{$\mathcal{B}(x,y)$} & \hspace{2.0 cm} & \multicolumn{1}{|c|}{$\Delta$} & \multicolumn{1}{c|}{$\mathcal{B}(x,y)$} \\ \hline
\endhead

\hline \multicolumn{5}{|c|}{{Continued}} \\ \hline
\endfoot

\hline 
\endlastfoot

-4511 & [1,\ (1, 0, 2, 1, -2, -1)]  & & 4477 & [1,\ (1, 0, 1, 0, 1, 1)]  \\   
-4930 & [1, (1, -2, -2, 3, 2, -1)]  & & 4549 & [1,\ (1, 0, 2, 2, 1, 1)] \\   
-5519 & [1,\ (1, 4, 5, 4, 4, 1)]  & & 4597 & [1,\ (1, 0, 1, 2, -1, 1)] \\
-5783 & [1,\ (1, -2, 1, 2, -2, -1)]  & & 4757 & [1,\ (1, 1, 2, 1, 2, 1)] \\
-7031 & [1,\ (1, 0, -1, 1, -1, -1)]  & & 4817 & [1,\ (1, -2, 1, -2, 2, -1)] \\
-7367 & [1,\ (1, 2, 0, -3, -2, 1)]  & & 4897 & [1,\ (1, 2, -2, 1, -2, 1)] \\
-7463 & [1,\ (1, -2, 1, 0, -2, 1)]  & & 5025 & [1,\ (1, -1, -1, 0, -1, -1)] \\
-8519 & [1,\ (1, -1, -1, 0, -1, 1)]  & & 5164 & [1,\ (1, -1, -1, 0, 2, 1)] \\
-8647 & [1,\ (1, 2, 0, 1, 2, -1)]  & & 5437 & [1,\ (1, 0, 2, 2, 2, 1)] \\
-9439 & [1,\ (1, 1, -1, -1, -2, -1)]  & & 5501 & [1,\ (1, -1, 2, 0, -2, 1)] \\
-9759 & [1,\ (1, 2, -1, 0, 2, -1)]  & & 5584 & [1,\ (1, 1, 0, 0, 1, -1)] \\
-10407 & [1,\ (1, -3, 0, 3, 1, 1)]  & & 5653 & [1,\ (1, 1, 0, 0, 2, 1)] \\
-11119 & [1,\ (1, 1, -2, -3, -3, -1)]  & & 5753 & [1,\ (1, -1, 2, -1, -1, 1)] \\
-11551  & [1,\ (1, 0, -2, 3, 2, -3)]  & & 5864 & [1,\ (1, -2, 2, 1, -1, 1)] \\
-12447  & [1,\ (1, 0, 1, -1, -3, -1)]  & & 5913 & [1,\ (1, 1, 2, 1, 1, -1)] \\
-13219  & [1,\ (1, 0, 0, -2, -1, 1)]  & & 6241 & [1,\ (1, 1, 1, 2, 3, 1)] \\
-13523  & [1,\ (1, -2, -3, 0, 3, -1)] & & 6449 & [1,\ (1, 2, 3, 1, 1, -1)] \\
-13799  & [1,\ (1, -3, 2, -1, -1, 1)]  & & 6581 & [1,\ (1, 1, -2, -2, 1, 2)] \\
-13883  & [1,\ (1, 1, -2, 0, 1, -2)]  & & 6757 & [1,\ (1, -1, 0, 3, -2, 1)] \\
-14103  & [1,\ (1, -2, -1, 2, -2, 1)]  & & 6793 & [1,\ (1, -1, 0, -1, -1, -1)] \\
-14631  & [1,\ (1, 0, -3, 3, 1, -1)]  & & 7096 & [1,\ (1, 0, 0, 1, 1, -1)] \\
-14891  & [1,\ (1, -3, 0, 1, -2, -1)]  & & 7177 & [1,\ (1, -2, -1, 1, 3, 1)] \\
-14911  & [1,\ (1, -3, 0, -1, -3, -1)]  & & 7265 & [1,\ (1, 1, 1, -2, -1, -1)] \\
-15536  & [1,\ (1, -3, -2, 2, 3, 1)]  & & 7333 & [1,\ (1, -2, 2, -2, 3, -1)] \\
-15919  & [1,\ (1, -4, -5, 5, 5, 1)]  & & 7373 & [1,\ (1, -2, 0, 2, 1, -1)] \\
-16816  & \ [1,\ (1, 3, 0, -4, -1, -1)] \  & & 7376 & \ [1,\ (1, 2, 2, 0, -2, -2)] \ \\
\end{longtable}
\end{center}
\normalsize 

\section*{Acknowledgments} 

I thank Keith Matthews for asking the conditions under which there exists a binary form whose discriminant is equal to that of some number field of the same degree. This lead to the inclusion of Section \ref{essent}, some additional references, and the correction of several miss-prints.

\end{document}